\documentclass{amsart}

\usepackage{amsfonts}
\usepackage{latexsym}
\usepackage{amsmath}
\usepackage{amssymb}
\usepackage{multicol}

\newcommand{\R}{\mathbb{R}}

\newcommand{\C}{\mathbb{C}}
\newcommand{\ct}{C_\circ }
\newcommand{\N}{\mathbb{N}}

\newcommand{\T}{\mathbb{T}}
\renewcommand{\P}{\mathbb{P}}
\newcommand{\card}{\mathrm{card}}

\newtheorem{theorem}{Theorem}%[section]
\newtheorem{lemma}[theorem]{Lemma}
\newtheorem{corollary}[theorem]{Corollary}

\arraycolsep=1.4pt

\theoremstyle{definition}
\newtheorem{definition}[theorem]{Definition}
\newtheorem{example}[theorem]{Example}
\newtheorem{proposition}[theorem]{Proposition}

\newtheorem{remark}[theorem]{Remark}

\numberwithin{equation}{section}
\numberwithin{theorem}{section}

\begin{document}

\title[Second Main Theorem in the Tropical Projective Space]{Second Main Theorem in the Tropical Projective Space}

\author{Risto Korhonen}
\address{Department of Physics and Mathematics, University of Eastern Finland, P.O. Box 111,
FI-80101 Joensuu, Finland}
\email{risto.korhonen@uef.fi}
\author{Kazuya Tohge}
\address{College of Science and Engineering, Kanazawa University, Kakuma-machi, Kanazawa, 920-1192, Japan}
\email{tohge@t.kanazawa-u.ac.jp}

\thanks{Supported in part by the Academy of Finland Grant \#268009, and the Japan Society for the Promotion of Science Grant-in-Aid for Scientific Research (C) \#22540181 and \#25400131.}
\subjclass[2010]{Primary 14T05, Secondary 32H30, 30D35}
%\date{}

\keywords{Tropical Nevanlinna theory, Cartan characteristic, tropical projective space}

\begin{abstract}
Tropical Nevanlinna theory, introduced by Halburd and Southall as a tool to analyze integrability of ultra-discrete equations, studies the growth and complexity of continuous piecewise linear real functions. The purpose of this paper is to extend tropical Nevanlinna theory to $n$-dimensional tropical projective spaces by introducing a natural characteristic function for tropical holomorphic curves, and by proving a tropical analogue of Cartan's second main theorem. It is also shown that in the $1$-dimensional case this result implies a known tropical second main theorem due to Laine and Tohge.
\end{abstract}

\maketitle

\section{Introduction}

Tropical Nevanlinna theory of piecewise linear real functions, or of \textit{tropical meromorphic functions}, was recently introduced by Halburd and Southall \cite{halburds:09}. They defined tropical versions of the Nevanlinna functions, and showed that they share many of the properties of their classical counterparts \cite{hayman:64,cherryy:01}, including Jensen's formula and an analogue of the first main theorem. Halburd and Southall applied tropical Nevanlinna theory to measure complexity of tropical meromorphic functions satisfying ultra-discrete equations. They suggested, analogously to the case of difference equations in the complex plane \cite{ablowitzhh:00,halburdk:07PLMS}, that existence of sufficiently many finite-order tropical meromorphic solutions of an ultra-discrete equation is a sufficient condition for the equation in question to be of Painlev\'e type. Laine and Yang \cite{lainey:10} have laid the groundwork for the systematic study of value distribution of tropical meromorphic solutions of ultra-discrete equations by proving a number of general results applicable to large classes of ultra-discrete equations. These include a generalized ultra-discrete version of Clunie's lemma, and an analogue of Mohon'kos' lemma on value distribution of meromorphic solutions of differential equations. A study of general fundamental properties of tropical meromorphic functions has been performed by Tsai in \cite{tsai:12}. Tsai mainly discusses the family of piecewise linear functions defined on the extended real line $\mathbb{R}\cup\{-\infty\}$, and he calls tropical meromorphic functions defined on $\mathbb{R}$ by the name $\mathbb{R}$-\textit{tropical meromorphic}.

%Note that a tropical meromorphic function is not necessarily defined at $-\infty$. Such a function $f(x)$ has the point $-\infty$ as a pole, a root or an ordinary point, according to $\lim_{x\to -\infty}f'(x)<0, >0, =0$, respectively.

Laine and Tohge generalized tropical Nevanlinna theory to include piecewise linear functions with arbitrary real slopes, and proved a tropical version of the second main theorem for tropical meromorphic functions under a growth condition that is less restrictive than demanding finite order \cite{lainet:11}. Their results imply that behaviour of tropical meromorphic functions is in certain respects fundamentally different from their classical counterparts in the sense of value distribution. On one hand, the tropical second main theorem due to Laine and Tohge implies that under a natural non-degeneracy condition tropical meromorphic functions of finite order have no deficient values. On the other hand, a meaningful ramification term for the second main theorem in the tropical setting is yet to be discovered.

%The difference in the possible number of deficient values in the tropical and classical cases may stem from the underlying geometric properties of the corresponding target spaces $\R\cup\{\infty\}$ and $\C\cup\{\infty\}$. In view of Ahlfors' interpretation of the number $2$ for the number of possible Picard exceptional values of complex meromorphic functions as the Euler characteristic of the sphere, the number $0$ may be understood as its corresponding number to the circle. But there is so far no proof of this correspondence, unfortunately.

%Moreover, there is so far no clear analogue for ramification in tropical Nevanlinna theory.

The purpose of this study is to extend tropical Nevanlinna theory to tropical holomorphic curves in a finite dimensional tropical projective space. We introduce a tropical analogue of the Cartan characteristic function for tropical holomorphic curves, and show that it reduces to the Nevanlinna characteristic due to Halburd and Southall in the one-dimensional case. As a central result of the tropical Nevanlinna-Cartan theory, we introduce a tropical analogue of Cartan's second main theorem and show that it generalizes the second main theorem by Laine and Tohge \cite{lainet:11}. In fact, our results imply a stronger version of the tropical second main theorem by Laine and Tohge in the sense that one of the conditions in their theorem can be deleted by using our tropical analogue of Cartan's second main theorem. This result, which is~Theorem \ref{tcartan2} below, implies also a second main theorem containing a tropical analogue of the ramification term expressible in terms of a tropical Casoratian.

\section{Tropical linear algebra}\label{linearsection}

In order to describe properties of tropical hyperplanes, we need to go through a number of notions from tropical linear algebra in the context of tropical entire and meromorphic functions. We start with the basic notation of tropical operations.

We define $0_{\circ}:=-\infty$ and $1_{\circ}:=0$, and we denote by $\R_{\max}$ the set $\R\cup\{-\infty\}$.
For elements $a, b\in\R_{\max}$, we define operations $\oplus $, $\odot $ and $\oslash$ by
$$
a\oplus b:=\max\{a,b\}, \quad a\odot b:=a+b \quad \text{and} \quad a\oslash b:=a-b,
$$
where $b\not=0_\circ$ in the tropical division. We also adopt the notation
$$
\frac{a}{b}\oslash := a\oslash b, \quad a^{\odot b}:= ba \quad \text{and} \quad a\odot b^{\odot (-1)}=a\oslash b.
$$
Clearly, $\max\{a,-\infty\}=\max\{-\infty, a\}=a$ and $a+(-\infty)=-\infty+a=-\infty$, for any $a\in\R_{\max}$,
so that
$$
a\oplus 0_{\circ}=0_{\circ}\oplus a = a \quad \text{and} \quad a\odot 0_{\circ}=0_{\circ}\odot a=0_{\circ}
$$
for all elements $a\in\R_{\max}$. The set $\R_{\max}$ together with the operations $\oplus $ and $\odot $, $(\R_{\max}, \oplus , \odot, 0_{\circ}, 1_{\circ})$
is called {\it max-plus algebra}, which is a semiring, that is, a non-empty set endowed with two binary operations $\oplus $ and $\odot $
such that
\smallskip
\begin{itemize}
\item $\oplus $ is associative and commutative with zero element $0_{\circ}$;
\item $\odot $ is associative, distributes over $\oplus $, and has unit element $1_{\circ}$;
\item $0_{\circ}$ is absorbing for $\odot $.
\end{itemize}
\smallskip
Since $\odot $ is commutative and $\oplus$ is idempotent, max-plus algebra is a commutative and idempotent semiring \cite{yoeli:61,speyers:09}.

The operations of addition $\oplus$ and multiplication $\odot$ for the $(n+1)\times(n+1)$ matrices $A=(a_{ij})$ and $B=(b_{ij})$ are defined by
	\begin{equation*}
	A\oplus B = (a_{ij}\oplus b_{ij})
	\end{equation*}
and
	\begin{equation*}
	A\odot B = \left(\bigoplus_{k=0}^n a_{ik}\odot b_{kj}\right),
	\end{equation*}
respectively.
An $(n+1)\times(n+1)$ matrix $A$ is called {\it regular} if $A$ contains at least one element different from $0_{\circ}$ in each row.
As in \cite{yoeli:61} we define the tropical determinant $|A|_\circ$ of $A$ by
	\begin{equation*}
	|A|_\circ = \bigoplus a_{0\pi(0)}\odot a_{1\pi(1)} \odot \cdots \odot a_{n\pi(n)},
	\end{equation*}
where the tropical summation is taken over all permutations $\{\pi(0),\pi(1),\ldots,\pi(n)\}$ of $\{0,1,\ldots,n\}$. This definition coincides with the definition of a tropical permanent, due to the fact that there is no negation in tropical arithmetic \cite{yoeli:61,maclagans:09}. The permanent of a matrix $A$ is often denoted by $\textrm{per}(A)$ or $\textrm{perm}(A)$ in the literature, see, e.g., \cite{olsderr:88}. Butkovi{\v{c}} \cite{butkovic:10} uses the term \textit{max-algebraic permanent} (or briefly permanent) and the notation $\textrm{maper}(A)$. Note that an $(n+1)\times(n+1)$ matrix $A$ is regular if and only if $|A|_\circ\neq 0_{\circ}$.

Next we will define tropical linear combinations and tropical linear independence of tropical entire and meromorphic functions. At this point it suffices to know that a tropical meromorphic function is a continuous piecewise linear function in $\mathbb{R}$, and a tropical entire function is a tropical meromorphic function with a convex graph. Clearly it does not make much sense to consider tropical linear relations of, say, tropical entire functions $g_0,\ldots,g_n$ in a directly analogous way to the classical case as
$$
\bigoplus_{\nu=0}^n a_{\nu}\odot g_{\nu}\equiv 0_\circ\,, \quad
\text{that is}, \quad \max_{0\leq \nu\leq n}\{a_{\nu}+g_{\nu}\}=-\infty,
$$
since this implies $a_{\nu}=0_\circ$ for all $\nu\in\{0, \ldots, n\}$. For the same reason linear independence in the tropical setting cannot be defined exactly analogously to the usual classical definition. There are more than one way of dealing with this issue. In the following definition we apply the notion of linear independence due to Gondran and Minoux \cite{gondranm:79,gondranm:84} for tropical meromorphic functions over the max-plus algebra $\R_{\max}$.

\begin{definition}\label{IJdef}\rm
Tropical meromorphic functions $f_0,\ldots,f_n$ are linearly dependent (respectively independent) in the \textit{Gondran-Minoux sense} if there exist (respectively there do not exist) two disjoint subsets $I$ and $J$ of $K:=\{0,\ldots,n\}$ such that $I\cup J =K$ and
    \begin{equation}\label{IJ}
    \bigoplus_{i\in I} \alpha_i \odot f_i = \bigoplus_{j\in J} \alpha_j \odot f_j,\quad \text{that~is,} \quad\max_{i\in I} \{\alpha_i + f_i\}=\max_{j\in J} \{\alpha_j +f_j\},
    \end{equation}
where the constants $\alpha_0,\ldots,\alpha_n\in\R_{\max}$ are not all equal to $0_\circ$.
\end{definition}

If either of the index sets $I$ or $J$ is empty, then the corresponding tropical sum is considered to vanish. Say that $I=\emptyset$ in \eqref{IJ}. Then $J=K$, and so \eqref{IJ} implies that $\alpha_0=\cdots=\alpha_n=0_\circ$, which is a contradiction. Therefore both $I$ and $J$ are non-empty sets in Definition \ref{IJdef}.
%
%Another natural way of defining linear independence is the following
%
%\begin{definition}\rm
%It is said that $f_1,\ldots,f_n$ are \textit{weakly linearly independent} over $\R_{\max}$ if there is $i\in\{1,\ldots,n\}$ such that $f_i$ can be expressed as a linear combination of $f_j$, $j\not=i$.
%\end{definition}
%
We define the notion of a tropical linear combination as follows.

\begin{definition}\label{lindef}
If $g_0,\ldots,g_n$ are tropical entire functions and $a_0,\ldots,a_n\in \R_{\max}$, then
	\begin{equation}\label{lcomb}
	f=\bigoplus_{\nu=0}^n a_{\nu}\odot g_{\nu}=\bigoplus_{i=0}^j a_{k_i}\odot g_{k_i}
	\end{equation}
is called a \textit{tropical linear combination} of $g_0,\ldots,g_n$ over $\mathbb{R}_{\max}$,
where the index set $\{k_0,\ldots,k_j\}\subset \{0,\ldots,n\}$ is such that $a_{k_i}\in\R$ for all $i\in\{0,\ldots,j\}$, while $a_{\nu}=0_{\circ}$ if $\nu\not\in\{k_0,\ldots,k_j\}$.
\end{definition}

A tropical linear combination may also be written in an inner product form $f=(a_0,\ldots,a_n)\odot (g_0,\ldots,g_n)^{\top}$ where $A^{\top}$ denotes the transpose of the matrix $A$.

%The linear combination $f$ is called \textit{complete} with respect to the base $g_0,\ldots,g_n$, when $a_{\nu}\not=0_\circ$ for all $\nu\in\{0,\ldots,n\}$.

If $g_0,\ldots,g_n$ do not all appear explicitly in the tropical linear combination $f$ defined by \eqref{lcomb}, then $f$ is considered to be ``degenerate'' in the sense that it does not contain explicit information from some of the functions $g_0,\ldots,g_n$.
This happens when there exists an index $\nu_0\in\{0,\ldots,n\}$ such that either the coefficient $a_{\nu_0}$ is $0_{\circ}$ or
the inequality $f(x) > a_{\nu_0}\odot g_{\nu_0}(x)$ holds for all $x\in\R$. Also, if $g_0,\ldots,g_n$ are linearly dependent in the Gondran-Minoux sense, then any tropical linear combination $f$ of these functions is degenerate in the sense that we can write $f$ in a form that does not utilize all of the functions in the set $\{g_0,\ldots,g_n\}$. We will make the notion of degeneracy precise in the rest of this section.
The first part of the following definition is an adaptation of the concepts introduced in \cite{akiangg:09}.

%(We will return to this issue after Lemma \ref{troprational} below. )

\begin{definition}\label{LGdef}
Let $G=\{g_0,\ldots,g_n\}(\neq\{0_{\circ}\})$ be a set of tropical entire functions, linearly independent in the Gondran-Minoux sense, and let
	$$
	\mathcal{L}_G=\textrm{span}\langle g_0,\ldots,g_n \rangle=\left\{\bigoplus_{k=0}^n a_k \odot g_k : (a_0,\ldots,a_n)\in \R_{\max}^{n+1}\right\}
	$$
be their \textit{linear span}.
(Note that $\bigoplus_{k=0}^n a_k \odot g_k=0_{\circ}$ when each $a_k$ is equal to $0_{\circ}$ so that $0_{\circ}\in \mathcal{L}_G$.)
The collection $G$ is called at the \textit{spanning basis} of $\mathcal{L}_G$. The \textit{shortest length} of the representation of $f\in \mathcal{L}_G\setminus \{0_{\circ}\}$ is defined by
	$$
	\ell(f)=\min\left\{j\in\{1,\ldots,n+1\}:f=\bigoplus_{i=1}^{j} a_{k_i} \odot g_{k_i}\right\},
	$$
where $a_{k_i}\in\R$ with integers $0\leq k_1<k_2< \cdots <k_j\leq n$, and the \textit{dimension} of $\mathcal{L}_G$ is
	\begin{equation}\label{limdef}
	\dim(\mathcal{L}_G) = \max\bigl\{\ell(f):f\in\mathcal{L}_G\setminus\{0_{\circ}\}\bigr\}.
	\end{equation}
\end{definition}

The definition of the dimension of $\mathcal{L}_G$ above implies that there exist $f\in\mathcal{L}_G\setminus\{0_{\circ}\}$ such that
    $$
    f=\bigoplus_{i=1}^{\dim(\mathcal{L}_G)}a_{k_i}\odot g_{k_i}
    $$
with $a_{k_i}\in\R$ for all $i\in\{1,\ldots,\dim(\mathcal{L}_G)\}$, but the same $f$ satisfies
    $$
    f\not=\bigoplus_{i=1}^{\dim(\mathcal{L}_G)-1}b_{m_i}\odot g_{m_i}
    $$
no matter how the collection of constants $b_{m_i}\in\R$, $i\in\{1,\ldots,\dim(\mathcal{L}_G)-1\}$, is chosen.

Now we can define completeness of tropical linear combinations in exact terms as follows.

\begin{definition}\label{completeDef}
Let $G=\{g_0,\ldots,g_n\}(\neq\{0_{\circ}\})$ be a set of tropical entire functions, linearly independent in the Gondran-Minoux sense, and let $f$ be a tropical linear combination of $g_0,\ldots,g_n$. If $\ell(f)=n+1$, then $f$ is said to be \textit{complete}. That is, the coefficients $a_k$ in any expression of $f$ of the form
    $$
    f=\bigoplus_{k=0}^na_k\odot g_k
    $$
must satisfy $a_k\in\mathbb{R}$ for all $k\in \{0, \ldots, n\}$ so that $\mathcal{L}_G$ attains its full dimension, $\dim(\mathcal{L}_G)=n+1$.
\end{definition}

Also, by Definition \ref{LGdef}, we have $\dim(\mathcal{L}_G)\geq 1$ for a non-empty finite set $G$ of tropical entire functions,
since $G\subset \mathcal{L}_G$ and $\ell(g)=1$ when $g\in G\setminus\{0_{\circ}\}$.
We illustrate the idea of defining $\dim(\mathcal{L}_G)$ in the following example.

\begin{example}\label{g012}
Let us take tropical rational functions (actually tropical polynomials) $g_0(x)=x$, $g_1(x)\equiv 1_{\circ}$ and $g_2(x)=1_\circ\oslash x$.
We see that they are linearly independent in the Gondran-Minoux sense, so that none of them can be a expressed as a tropical linear combination of the other two. The classical linear algebraic analogue of this phenomenon would mean that the dimension of the corresponding linear space is three. But now
$$
\textrm{span}\langle x,1_{\circ},-x \rangle=\bigl\{g(x:a,b,c):=\max\{x+a,b,-x+c\} : a,b,c\in \R_{\max}\bigr\}
$$
and so $\ell\bigl(g(x:a,b,c)\bigr)=3$ when and only when $a,b,c\in\R$ with $b>(a+c)/2$. Hence the maximum $3$ in \eqref{limdef} is attained for any coefficients $a,b,c\in\R$ of $g$ such that $b>(a+c)/2$.
\end{example}

Changing the spanning basis $G$ might cause a loss of information in general. For instance, if we change the triple $\{g_0, g_1,g_2\}$ fixed above in Example~\ref{g012} to
\begin{eqnarray*}
f_0(x)&:=&a_{00}\odot g_0(x)\oplus a_{01}\odot g_1(x)=\max\{x+a_{00}, a_{01}\}, \\
f_1(x)&:=&a_{11}\odot g_1(x)\oplus a_{12}\odot g_2(x)=\max\{a_{11},-x+a_{12}\}, \\
f_2(x)&:=&a_{20}\odot g_0(x)\oplus a_{22}\odot g_2(x)=\max\{x+a_{20},-x+a_{22}\}
\end{eqnarray*}
or
\begin{equation}\label{ex1eq}
\left(
\begin{array}{c}
f_0(x)\\
f_1(x)\\
f_2(x)
\end{array}
\right)
=
\left(
\begin{array}{ccc}
a_{00}    & a_{01}    & 0_{\circ} \\
0_{\circ} & a_{11}    & a_{12}    \\
a_{20}    & 0_{\circ} & a_{22}
\end{array}
\right)
\odot
\left(
\begin{array}{c}
g_0(x)\\
g_1(x)\\
g_2(x)
\end{array}
\right),
\end{equation}
then the coefficient matrix is regular in the tropical meaning provided that the tropical determinant of the coefficient matrix does not vanish. This happens, for instance, if each $a_{ij}$ in the matrix is different from $0_\circ$:
\begin{eqnarray*}
\left|
\begin{array}{ccc}
a_{00} & a_{01} & 0_\circ \\
0_\circ & a_{11} & a_{12} \\
a_{20} & 0_\circ & a_{22}
\end{array}
\right|_\circ
&=& a_{00}\odot a_{11} \odot a_{22}\oplus a_{01}\odot a_{12} \odot a_{20}\\
&=& \max\{a_{00}+ a_{11}+ a_{22}, a_{01}+ a_{12}+ a_{20}\}.
\end{eqnarray*}
The case where exactly one of the sets $\{a_{00}, a_{11}, a_{22}\}$ and $\{a_{01}, a_{12}, a_{20}\}$ contains the element $0_\circ$ is degenerate in the sense that some of the $f_j$ necessarily coincides, up to a constant, with one of the $g_j$'s, which is not very interesting from our point of view.

%On the other hand, \eqref{ex1eq} still holds if the coefficient matrix in \eqref{ex1eq} is replaced by the following one, each segment of whose third row is different from $0_\circ$:
%$$
%\left(
%\begin{array}{ccc}
%a_{00} & a_{01} & 0_\circ \\
%0_\circ & a_{11} & a_{12} \\
%a_{20} & a_{\ast} & a_{22}
%\end{array}
%\right)
%\quad \text{with} \quad (a_{20}+a_{22})/2\geq a_{\ast}>0_\circ,
%$$
%since
%$$
%f_2(x)=\max\{x+a_{20}, a_{\ast}, -x+a_{22}\}.
%$$
%Thus $f_2(x)$ can be complete. But any modification similar to this is impossible for both $f_0(x)$ and $f_1(x)$ according to the form of $g_2(x)$ and $g_0(x)$, respectively. %Recall that by regularity we mean that the tropical determinant
%%\begin{eqnarray*}
%%\left|
%%\begin{array}{ccc}
%%a_{00} & a_{01} & 0_\circ \\
%%0_\circ & a_{11} & a_{12} \\
%%a_{20} & 0_\circ & a_{22}
%%\end{array}
%%\right|_\circ
%%&=& a_{00}\odot a_{11} \odot a_{22}\oplus a_{01}\odot a_{12} \odot a_{20}\\
%%&=& \max\{a_{00}+ a_{11}+ a_{22}, a_{01}+ a_{12}+ a_{20}\}
%%\end{eqnarray*}
%%of the coefficient matrix does not vanish.

On the other hand, we cannot represent the function $g_1(x)=1_{\circ}$ as a linear combination of the $f_0(x)$, $f_1(x)$ and $f_2(x)$, and thus
$g_1\not\in\textrm{span}\langle f_0,f_1,f_2 \rangle$. In this sense, the regularity of a tropical matrix does not imply the same or analogous properties as in the classical case. In fact, despite being regular, the matrix in question is not invertible in the sense that there is no matrix $(b_{ij})$ $(i,j\in\{0,1,2\})$ satisfying
$$
\left(
\begin{array}{ccc}
a_{00} & a_{01} & 0_\circ \\
0_\circ & a_{11} & a_{12} \\
a_{20} & 0_\circ & a_{22}
\end{array}
\right)
\odot
\left(
\begin{array}{ccc}
b_{00} & b_{01} & b_{02} \\
b_{10} & b_{11} & b_{12} \\
b_{20} & b_{21} & b_{22}
\end{array}
\right)
=
\left(
\begin{array}{ccc}
1_\circ & 0_\circ & 0_\circ \\
0_\circ & 1_\circ & 0_\circ \\
0_\circ & 0_\circ & 1_\circ
\end{array}
\right),
$$
which can be verified by an elementary calculation. %This property of tropical regular matrixes will play an important role in the discussion on the interpretation of the ramification term in the tropical Cartan second main theorem in section \ref{cartansection} below.

%Note that if exactly one of the sets $\{a_{00}, a_{11}, a_{22}\}$ and $\{a_{01}, a_{12}, a_{20}\}$ contain the element $0_\circ$, either of the $f_j$ necessarily coincides one of the $g_j$, which is not very interesting from our point of view.
%
%On the other hand, this coefficient matrix can be replaced by the following one, each segment of whose third row is different from $0_\circ$:
%$$
%\left(
%\begin{array}{ccc}
%a_{00} & a_{01} & 0_\circ \\
%0_\circ & a_{11} & a_{12} \\
%a_{20} & a_{\ast} & a_{22}
%\end{array}
%\right)
%\quad \text{with} \quad (a_{20}+a_{22})/2\geq a_{\ast}>0_\circ,
%$$
%since
%$$
%f_2(x)=\max\{x+a_{20}, a_{\ast}, -x+a_{22}, \}.
%$$
%But this modification is impossible for both $f_0(x)$ and $f_1(x)$ according to the form of $g_2(x)$ and $g_0(x)$, respectively.

In the previous discussion the set $\{f_0,f_1,f_2\}$ includes elements that are not complete over $\{g_0,g_1,g_2\}$. We say that such a set is \textit{degenerate}. The exact definition of this notion is as follows.

\begin{definition}\label{ddgdef}
Let $G=\{g_0,\ldots,g_n\}$ be a set of tropical entire functions, linearly independent in the Gondran-Minoux sense, and let $Q\subset\mathcal{L}_G$ be a collection of tropical linear combinations of $G$ over $\mathbb{R}_{\max}$. The \textit{degree of degeneracy} of $Q$ is defined to be
	$$
	\mbox{ddg}(Q)=\card\left(\left\{f\in Q : \ell(f) < n+1\right\}\right).
	$$
If $\mbox{ddg}(Q)=0$ then $Q$ is called \textit{non-degenerate}.
\end{definition}

In other words, the degree of degeneracy of a set of tropical linear combinations is the number of its non-complete elements.

\section{Tropical meromorphic functions}\label{merosection}

Halburd and Southall \cite{halburds:09} defined a max-plus (or tropical) meromorphic function as a real continuous piecewise linear function with integer slopes, which generalizes the concept of tropical rational function (see, e.g., \cite{itenbergms:07}) in a natural way. Laine and Tohge \cite{lainet:11} showed that key results in the tropical Nevanlinna theory introduced by Halburd and Southall can be naturally extended to the case where tropical meromorphic functions have non-integer real slopes. We will extend the definitions of Halburd and Southall, and of Laine and Tohge, by introducing tropical holomorphic curves in the $n$-dimensional tropical projective space $\T\P^n=\{\R\odot \bold{a} : \bold{a}\in \R_{\max}^{n+1}\}$ in section \ref{holosection} below. Before that we need some preparatory results on properties of tropical meromorphic functions.

As in \cite{halburds:09} (see also \cite{lainet:11}), given a tropical meromorphic function $f$, we define
	$$
	\omega_f(x)=\lim_{\varepsilon\to +0}\bigl(f'(x+\varepsilon)-f'(x-\varepsilon)\bigr)
	$$
for all $x\in\R$. Note that the support of $\omega_f$ forms a discrete set with no finite limit points in $\R$.
If $\omega_f(x)<0$, then $x$ is called a \textit{pole} of $f$, while if $\omega_f(x)>0$, then $x$ is said to be a \textit{root} of $f$.
In both cases the multiplicity of the pole or the root is defined to be $|\omega_f(x)|$. We say that a real piecewise linear continuous function is \textit{tropical rational} if it has only finitely many roots and poles.
The following lemma due to Tsai characterizes tropical rational functions. It is a generalization of \cite[Lemma~2.1]{halburds:09}.

\begin{lemma}[\cite{tsai:12}, Theorem 7.3]\label{troprational}
If $f:\R\to\R$ is tropical meromorphic then it is tropical rational if and only if it can be written in the form
	\begin{equation}\label{f}
	f(t)=\left(a_0\oplus a_1 \odot t^{\odot l_1}\oplus\cdots\oplus a_p\odot t^{\odot l_p} \right)
	\oslash\left(b_0\oplus b_1\odot t^{\odot s_1}\oplus\cdots\oplus b_q\odot t^{\odot s_q}\right),
	\end{equation}
where $p,q\in\N\cup\{0\}$, $0<l_1<\cdots< l_p$,  $0<s_1<\cdots< s_q$, and the coefficients $a_0,\ldots,a_p$ and $b_0,\ldots,b_q$ are real constants.
\end{lemma}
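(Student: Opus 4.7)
My plan is to treat the two implications separately, exploiting throughout the correspondence between tropical polynomials and convex continuous piecewise linear functions with finitely many slopes.

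For the sufficiency, I would observe that the numerator and denominator in \eqref{f} are each a maximum of finitely many affine functions of $t$, hence continuous, convex, and piecewise linear with only finitely many breakpoints. Consequently $\omega_P$ and $\omega_Q$ are nonnegative and finitely supported, and since $\omega_{P\oslash Q}=\omega_P-\omega_Q$, the function $f=P\oslash Q$ has finitely supported $\omega_f$. Its positive and negative values are precisely the multiplicities of the roots and poles of $f$, which is therefore tropical rational.

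For the necessity, I would reconstruct $f$ from its root and pole data. Suppose $f$ has roots at $\alpha_1<\cdots<\alpha_p$ with multiplicities $m_i>0$ and poles at $\beta_1<\cdots<\beta_q$ with multiplicities $n_j>0$, so $\omega_f(\alpha_i)=m_i$, $\omega_f(\beta_j)=-n_j$, and $\omega_f$ vanishes elsewhere. Define
\[
\tilde P(t)=\sum_{i=1}^p m_i\max\{0,\,t-\alpha_i\},\qquad \tilde Q(t)=\sum_{j=1}^q n_j\max\{0,\,t-\beta_j\}.
\]
Each is a convex piecewise linear function with slope $0$ at $-\infty$ and the prescribed breakpoints. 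A direct Newton-polygon computation writes each as a tropical polynomial of the shape required in \eqref{f}, with strictly increasing exponents given by the cumulative sums $m_1, m_1+m_2,\ldots$ (respectively $n_1, n_1+n_2,\ldots$) of the multiplicities. By construction $\omega_{\tilde P-\tilde Q}=\omega_f$, hence the difference $h:=f-\tilde P+\tilde Q$ satisfies $\omega_h\equiv 0$ and is therefore affine, $h(t)=A+Bt$. When $B\ge 0$ one absorbs $A+Bt$ into $\tilde P$, replacing each term $a_k\odot t^{\odot l_k}$ by $(a_k+A)\odot t^{\odot(l_k+B)}$; when $B<0$ one absorbs it analogously into $\tilde Q$.

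The main obstacle will be the bookkeeping in this absorption step, in particular ensuring that the shifted exponents remain strictly positive and strictly increasing, and that a constant term can be reinserted at the bottom so that the result matches the form \eqref{f}. This should come down to a small case analysis on the signs of the asymptotic slopes $f'(\pm\infty)$. Everything else is a routine computation with the max-plus operations and the convex geometry of piecewise linear functions on $\R$.
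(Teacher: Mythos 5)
Your overall strategy coincides with the paper's: for sufficiency, observe that breaks of a tropical polynomial occur only where two affine terms cross, so $\omega_f$ is finitely supported; for necessity, rebuild numerator and denominator from the root and pole data, the exponents being exactly your cumulative sums of multiplicities $m_1, m_1+m_2,\ldots$ (the paper's $L_i=\omega_f(\alpha_1)+\cdots+\omega_f(\alpha_i)$ and $S_j$) with coefficients chosen so the breakpoints land at the $\alpha_i$ and $\beta_j$. The sufficiency half of your argument is complete and correct.

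The point where you are more careful than the paper is also where the genuine problem sits. Your $\tilde P$ and $\tilde Q$ have slope $0$ near $-\infty$, so $\tilde P-\tilde Q$ reproduces $f$ only up to an affine discrepancy $h(t)=A+Bt$, and your absorption step must dispose of $h$. When $B=0$ this is harmless: add $A$ to every numerator coefficient, including the constant term. But when $B\neq 0$ the absorption cannot be completed in the required form: after replacing $l_k$ by $l_k+B$, every exponent in the numerator is strictly positive, so the numerator tends to $-\infty$ as $t\to-\infty$, and reinserting any real constant term $a_0$ necessarily alters the function on a half-line. Since \eqref{f} demands real $a_0,b_0$ and strictly positive $l_i,s_j$, both numerator and denominator are eventually constant as $t\to-\infty$, which forces $f'(-\infty)=0$; a tropical rational function such as $f(t)=t$ (no roots, no poles) therefore admits no representation of the form \eqref{f} at all. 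So the case analysis you defer as bookkeeping is in fact an obstruction, not a routine verification. In fairness, the paper's own proof is no better here: it writes down the candidate representation with ``arbitrary'' $A_0,B_0$ and never checks that it equals $f$ rather than merely sharing its roots and poles, which silently assumes $B=0$. Your proposal at least makes the missing step visible, but as written it does not close it.
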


\begin{proof}
We include an alternative proof to the one given by Tsai \cite{tsai:12}. Suppose first that a tropical meromorphic function is given by the formula \eqref{f}.
Then $f$ is a real piecewise linear continuous function, and so indeed a tropical meromorphic function.
In addition, $\omega_f(x)=0$ for all except finitely many $x\in\R$, from which it follows that $f$ has finitely many roots and poles. These roots and poles of $f$ can only appear at points where two of the terms in \eqref{f} are equal, and thus are finite in number. We conclude that $f$ is a tropical rational function.

Assume conversely that $f$ is a tropical rational function.
Then $f$ has only finitely many roots, say $\alpha_1,\ldots,\alpha_n$, and finitely many poles, say $\beta_1,\ldots,\beta_m$.
A desired representation of $f$ is then
	$$
	f(t)=\left(A_0\oplus A_1 \odot t^{\odot L_1}\oplus\cdots\oplus A_n\odot t^{\odot L_n} \right)
	\oslash\left(B_0\oplus B_1\odot t^{\odot S_1}\oplus\cdots\oplus B_m\odot t^{\odot S_m}\right),
	$$
where $A_0\in\R$ and $B_0\in\R$ are arbitrary and
	\begin{equation*}
	\begin{split}
	A_i &= A_0 - \alpha_1 \omega_f(\alpha_1)- \alpha_2 \omega_f(\alpha_2)-\cdots -\alpha_i \omega_f(\alpha_i), \\
	L_i &= \omega_f(\alpha_1)+\cdots+\omega_f(\alpha_i),\\
	B_j &= B_0 - \beta_1 |\omega_f(\beta_1)|- \beta_2 |\omega_f(\beta_2)|-\cdots -\beta_j |\omega_f(\beta_j)|, \\
	S_j &= |\omega_f(\beta_1)|+\cdots+|\omega_f(\beta_j)|,\\
	\end{split}
	\end{equation*}	
for $i\in\{1,\ldots,n\}$ and $j\in\{1,\ldots,m\}$. Note that $0<L_1<\cdots < L_n$ and $0<S_1<\cdots < S_n$.	
\end{proof}

The above representation for $f(t)$ is not determined uniquely up to two arbitrary constants $A_0\in\R$ and $B_0\in\R$, as it may seem at first glance.
In fact, besides of tropical multiplication of a {\it tropical unit}, that is, a tropical meromorphic function with neither roots nor poles and thus of the form $C_0 \odot t^{\odot M_0}$, we need to also take into account the terms in surplus in the sense that they contribute nothing to the maximum.
For example, consider the numerator of $f(t)$ in the above expression
$$
f_1(t):=A_0 \oplus A_1 \odot t^{\odot L_1} \oplus \cdots \oplus A_n \odot t^{\odot L_n}.
$$
It forms a convex hull and so we can take lines located completely below the bordering polygonal line.
Such a line is given as a graph of a monic $A_k \odot t^{\odot L_k}$ $(n+1\leq k\leq N)$ so that we have
$$
f_1(t)=A_0 \oplus A_1 \odot t^{\odot L_1} \oplus \cdots \oplus A_n \odot t^{\odot L_n}\oplus A_{n+1} \odot t^{\odot L_{n+1}}
\oplus \cdots \oplus A_{N} \odot t^{\odot L_N}.
$$
The shortest possible expression for $f_1(t)$ can be obtained by taking $g_j = t^{\odot L_j}$, $j=0,\ldots,N$, and selecting a spanning basis $G\subset\{g_0,\ldots,g_{N}\}$ such that $f_1$ is a complete tropical linear combination of $G$ over $\mathbb{R}_{\max}$ in the sense of Definition \ref{completeDef}. See also Tsai's discussion on \textit{maximally represented} polynomials \cite[section~3]{tsai:12}.

The following result is a counterpart of the so-called Borel's lemma (see \cite[Theorem A.3.3]{ru:01}) for tropical units.

\begin{proposition}
Let $f_0, \ldots,f_n$ be tropical units such that $f_i\oslash f_j$ are not constants for any distinct $i$ and $j$.
Then $f_0, \ldots,f_n$ are linearly independent in the Gondran and Minoux sense.
\end{proposition}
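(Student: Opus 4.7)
The plan is to argue by contradiction using the observation that a tropical unit is an affine function $f_i(t)=c_i+m_i t$, and that the hypothesis $f_i\oslash f_j\not\equiv \text{const}$ amounts to saying that the slopes $m_0,\ldots,m_n$ are pairwise distinct. So suppose, contrary to the conclusion, that there exist disjoint subsets $I,J$ of $K=\{0,\ldots,n\}$ with $I\cup J=K$ and constants $\alpha_0,\ldots,\alpha_n\in\R_{\max}$ not all equal to $0_\circ$ such that
\[
\max_{i\in I}\{\alpha_i+c_i+m_i t\}=\max_{j\in J}\{\alpha_j+c_j+m_j t\}\qquad (t\in\R).
\]

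First I would rule out the possibility that one of the two sides is the empty maximum $-\infty$. If $\alpha_i=0_\circ$ for every $i\in I$ then the left-hand side is identically $-\infty$, which forces $\alpha_j=0_\circ$ for every $j\in J$ as well, contradicting the assumption that the $\alpha_k$ are not all $0_\circ$. Thus on each side at least one coefficient lies in $\R$, and both sides are genuine maxima of finitely many affine functions.

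The decisive step is to compare asymptotic slopes as $t\to+\infty$. The function $\max_{i\in I'}\{\alpha_i+c_i+m_i t\}$, where $I'=\{i\in I:\alpha_i\neq 0_\circ\}$, is convex piecewise linear and its slope for all sufficiently large $t$ equals $\max_{i\in I'} m_i$. Comparing with the analogous quantity on the right gives
\[
\max_{i\in I'} m_i \;=\; \max_{j\in J'} m_j,
\]
so there are indices $i^*\in I'\subset I$ and $j^*\in J'\subset J$ with $m_{i^*}=m_{j^*}$. Since $I\cap J=\emptyset$ we have $i^*\neq j^*$, and this contradicts the pairwise distinctness of the slopes that was extracted from the hypothesis on the ratios $f_i\oslash f_j$.

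The argument is straightforward once the correct reformulation of the hypothesis is in place; the only point requiring a little care is the trivial-side case above, and the choice of the asymptotic direction $t\to+\infty$ (one could equally well use $t\to-\infty$ with minimum slopes) to extract two indices in different blocks of the Gondran–Minoux partition sharing a common slope.
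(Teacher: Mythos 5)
Your proof is correct and follows essentially the same route as the paper: both argue by contradiction from a Gondran--Minoux relation and compare the two sides as $t\to+\infty$, where a single affine term dominates on each side, forcing two units from the disjoint index sets to agree up to an additive constant (equivalently, to share a slope), contradicting the hypothesis on the quotients $f_i\oslash f_j$. The only difference is cosmetic: the paper justifies the eventual single-term domination by invoking the tropical rationality of both sides (Lemma~\ref{troprational}), while you use the affine form $c_i+m_it$ of tropical units and asymptotic slopes directly, which is a slightly more self-contained way of making the same comparison.
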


\begin{proof}
Suppose, on the contrary to the assertion, that there are constants $\alpha_j\in \R_{\max}$, not all equal to $0_{\circ}$, such that
    \begin{equation}\label{lindep}
    \bigoplus_{i\in I} \alpha_i \odot f_i(t) = \bigoplus_{j\in J} \alpha_j \odot f_j(t),
    \end{equation}
where $I$ and $J$ are disjoint subsets of $K:=\{0,\ldots,n\}$ such that $I\cup J = K$. By Lemma~\ref{troprational} both the left and the right side of \eqref{lindep} are tropical rational, and hence there exist $t_0\in\R$, $i_0\in I$ and $j_0\in J$ such that
    $$
    \alpha_{i_0} \odot f_{i_0}(t) = \alpha_{j_0} \odot f_{j_0}(t)
    $$
for all $t\geq t_0$. But since $f_{i_0}$ and $f_{j_0}$ are tropical units, it follows that
    $$
    f_{i_0}\oslash f_{j_0} = \alpha_{j_0} \oslash \alpha_{i_0},
    $$
which is a contradiction.
\end{proof}

If a tropical meromorphic function does not have any poles, we say it is \textit{tropical entire}.
All meromorphic functions in the complex plane can be represented as a quotient of two entire functions, which do not have any common roots.
A parallel result is valid also in the tropical real line.

\begin{proposition}\label{fgh}
For any tropical meromorphic function $f$ there exist tropical entire functions $g$ and $h$ such that $f=h\oslash g$, where $g$ and $h$ do not have any common roots.
\end{proposition}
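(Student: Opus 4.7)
The plan is to construct $g$ as a tropical entire function whose roots are exactly the poles of $f$, counted with multiplicity, and then to set $h = f\odot g$ (that is, $h(x)=f(x)+g(x)$). Writing $f = h\oslash g$ is then automatic, and the verification that $h$ is tropical entire with no common roots with $g$ comes down to the additivity $\omega_{u\odot v}(x)=\omega_u(x)+\omega_v(x)$ for continuous piecewise linear $u,v$.

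First I enumerate the poles of $f$. Since the support of $\omega_f$ is discrete in $\R$ with no finite limit point, the poles of $f$ form a (possibly bi-infinite) discrete sequence $\{\beta_j\}$ with multiplicities $m_j := |\omega_f(\beta_j)|>0$. Next I construct $g$. Pick a reference point $x_0\in\R$ that is neither a pole nor a root of $f$, and set $g(x_0)=0$ and $g'(x_0)=0$. Extend $g$ to all of $\R$ by declaring $g'$ to be piecewise constant, with the only jumps occurring at the $\beta_j$ and of size exactly $m_j>0$:
\[
g'(x) \;=\; \sum_{x_0<\beta_j\leq x} m_j \;-\; \sum_{x<\beta_j\leq x_0} m_j,
\]
which is well-defined at every real $x$ because each bounded interval contains only finitely many $\beta_j$. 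Then $g(x)=\int_{x_0}^x g'(t)\,dt$ is continuous and piecewise linear, $g'$ is non-decreasing (so $g$ is convex, hence tropical entire), and by construction $\omega_g(\beta_j)=m_j$ for each $j$ and $\omega_g(x)=0$ otherwise.

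Now set $h := f\odot g$. Since classical differentiation is linear, for every $x\in\R$
\[
\omega_h(x) \;=\; \omega_f(x)+\omega_g(x).
\]
At a pole $\beta_j$ of $f$ we get $\omega_h(\beta_j)=-m_j+m_j=0$; at a root of $f$ (which, by the choice of $\{\beta_j\}$, is not among the $\beta_j$) we get $\omega_h(x)=\omega_f(x)>0$; at any other point $\omega_h(x)=0$. Hence $\omega_h\geq 0$ everywhere, so $h$ is tropical entire, its roots are precisely the roots of $f$, and these are disjoint from the roots $\{\beta_j\}$ of $g$. Finally $f = h\oslash g$ by the very definition of $h$.

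The only genuinely nontrivial step is the construction of a tropical entire function with a prescribed discrete set of roots and multiplicities, and the argument above handles it directly by prescribing the (non-decreasing, piecewise constant) derivative and integrating. Everything else is bookkeeping with the additivity of $\omega$.
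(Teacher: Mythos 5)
Your proof is correct, and it takes a genuinely different route from the paper's. The paper proceeds by exhaustion: it restricts $f$ to the intervals $I_n=[-n,n]$, replaces each restriction by a tropical rational function $R_n$ agreeing with $f$ there and having no roots or poles outside $I_n$, invokes Lemma~\ref{troprational} to write $R_n=P_n\oslash Q_n$ with tropical polynomials $P_n,Q_n$ free of common roots, arranges the compatibility $P_{n+1}\equiv P_n$ and $Q_{n+1}\equiv Q_n$ on $I_n$, and passes to the locally uniform limit. You instead build the denominator directly, in the spirit of a canonical product: you prescribe $g'$ as the non-decreasing piecewise-constant function whose jumps sit exactly at the poles of $f$ with sizes $|\omega_f(\beta_j)|$ (well defined because the support of $\omega_f$ is discrete with no finite limit points), integrate to get a convex, hence tropical entire, $g$, and then set $h=f\odot g$; the additivity $\omega_{f\odot g}=\omega_f+\omega_g$ makes $h$ entire with roots exactly the roots of $f$, disjoint from the roots of $g$. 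Your argument is shorter and more self-contained: it avoids both the appeal to Tsai's rational representation lemma and the compatibility and limit bookkeeping, at the cost of nothing essential; the paper's version, on the other hand, mirrors the classical ``local representation plus exhaustion'' template and reuses machinery (Lemma~\ref{troprational}) it had already established.
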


\begin{proof}
Let $n\in\N$, and let $f_n$ be the restriction of $f$ to the interval $I_n=[-n,n]$, i.e., $f_n:I_n\to\R$ such that $f_n(t)=f(t)$ for all $t\in I_n$. The fact that roots and poles of a tropical meromorphic function have no finite limit points implies that $f_n$ has only finitely many of them, and so it follows that there exists a tropical rational function $R_n$ such that $R_n(t)=f_n(t)$ for all $t\in I_n$ and $R_n$ does not have any roots or poles outside of $I_n$. By Lemma~\ref{troprational} $R_n$ can be represented in the form
	\begin{equation*}
	R_n(t)=P_n(t) \oslash Q_n(t),
	\end{equation*}
where $P_n(t)$ and $Q_n(t)$ are tropical entire functions with finitely many roots (i.e. tropical polynomials). Also, $P_n$ and $Q_n$ do not have any roots in common, and we can take them such that
    $$
    P_{n+1}(t)\equiv P_n(t) \quad \textrm{and}\quad Q_{n+1}(t)\equiv Q_n(t)
    $$
on the interval $I_n$, and therefore
    $$
    h(t):= \lim_{k\to\infty} P_k(t) = P_n(t) \quad \textrm{and}\quad g(t):= \lim_{k\to\infty} Q_k(t) = Q_n(t)
    $$
for all $t\in I_n$. Hence the assertion follows by taking $h=\lim_{n\to\infty} P_n$ and $g=\lim_{n\to\infty} Q_n$, where the convergence is locally uniform.
\end{proof}

We now recall the definitions of tropical Nevanlinna functions from \cite{halburds:09,lainet:11}. The tropical proximity function of a tropical meromorphic function $f(x)=h(x)\oslash g(x)$ is
	$$
	m(r,f)=\frac{1}{2}\bigl(f^+(r)+f^+(-r)\bigr),
	$$
where
	$$
	f^+(x)=\max\{0,f(x)\},
	$$
and the tropical counting function is
	$$
	N(r,f)=\frac{1}{2}\int_0^r n(t,f)dt,
	$$
where
	$$
	n(x,f)=\sum_{|s|\leq x \atop \omega_f(s) < 0 } |\omega_f(s)|.
	$$
The tropical Nevanlinna characteristic function is then defined in a usual way as
	$$
	T(r,f)=m(r,f)+N(r,f),
	$$
and it satisfies
	\begin{equation}\label{tPJ}
	T(r,f)-T(r,1_\circ\oslash f)=f(0)
	\end{equation}
which is the tropical Jensen formula \cite{halburds:09,lainet:11}. Now the \textit{hyper-order} of a tropical meromorphic function $f$ can be defined as
	\begin{equation}\label{hyperorder}
	\varsigma(f)=\limsup_{r\to\infty}\frac{\log\log T(r,f)}{\log r}.
	\end{equation}
Tropical Nevanlinna functions satisfy similar basic inequalities as their classical counterparts. For instance, if $f_1,\ldots,f_n$ are tropical meromorphic functions, then
    \begin{equation}\label{proximityineq1}
    m\left(r,\bigoplus_{j=1}^n f_j\right) \leq \sum_{j=1}^n m\left(r,f_j\right)
    \end{equation}
and
    \begin{equation}\label{proximityineq2}
    m\left(r,\bigodot_{j=1}^n f_j\right) \leq \bigodot_{j=1}^n m\left(r,f_j\right).
    \end{equation}
Similar inequalities hold also for the counting function and the characteristic function.

Halburd and Southall obtained a tropical analogue of the lemma on the logarithmic derivative for finite-order tropical meromorphic functions in \cite{halburds:09}. Their result was extended to the case of hyper-order strictly less than one by Laine and Tohge \cite{lainet:11}.

\begin{theorem}[\cite{lainet:11}]\label{logder}
If $\varepsilon>0$, $c\in\R$ and $f$ is a tropical meromorphic function such that $\varsigma(f)=\varsigma<1$, then
	\begin{equation}\label{logderanalogue}
	m(r,f(x+c)\oslash f(x))=o\left(\frac{T(r,f)}{r^{1-\varsigma-\varepsilon}}\right)
	\end{equation}
as $r$ approaches infinity outside of a set of finite logarithmic measure.
\end{theorem}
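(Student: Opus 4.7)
The plan is to give a direct piecewise linear analogue of the Halburd--Korhonen shift version of the lemma on the logarithmic derivative. For a tropical meromorphic function $f$, piecewise linearity implies that $f'$ exists off a discrete set and
\[
\bigl|f(x+c)-f(x)\bigr|\leq |c|\,\sup_{t\in[x,x+c]}\bigl|f'(t)\bigr|.
\]
Since $f'(t)-f'(0)$ equals the total signed jump $\sum_{s}\omega_f(s)$ over the points $s$ between $0$ and $t$, the slope is controlled by
\[
\bigl|f'(t)\bigr|\leq |f'(0)|+n(|t|,f)+n(|t|,1_\circ\oslash f),
\]
the two counting functions tallying the poles and the roots of $f$, respectively.

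Evaluating the proximity function at $x=\pm r$ and combining the two displays above yields
\[
m\bigl(r,f(x+c)\oslash f(x)\bigr)\leq |c|\bigl(|f'(0)|+n(r+|c|,f)+n(r+|c|,1_\circ\oslash f)\bigr).
\]
The standard Nevanlinna conversion $n(R,\cdot)\leq 2(N((1+\eta)R,\cdot)-N(R,\cdot))/(\eta R)$ applied to both counting functions, together with the tropical Jensen formula \eqref{tPJ} which gives $N(R,1_\circ\oslash f)\leq T(R,f)+O(1)$ and $N(R,f)\leq T(R,f)$, then produces, for every $\eta>0$,
\[
m\bigl(r,f(x+c)\oslash f(x)\bigr)\leq \frac{C|c|}{\eta r}\,T\bigl((1+\eta)r,f\bigr)+O(1).
\]

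The final step invokes a tropical Borel-type growth lemma for hyper-order $\varsigma(f)=\varsigma<1$: for every $\varepsilon'>0$ one has $T(r+r^{1-\varsigma-\varepsilon'},f)=(1+o(1))T(r,f)$ outside an $r$-set of finite logarithmic measure. Choosing $\varepsilon'=\varepsilon/2$ and $\eta=r^{-\varsigma-\varepsilon/2}$, so that $\eta r=r^{1-\varsigma-\varepsilon/2}$, and substituting, one obtains
\[
m\bigl(r,f(x+c)\oslash f(x)\bigr)=O\!\left(\frac{T(r,f)}{r^{1-\varsigma-\varepsilon/2}}\right)=o\!\left(\frac{T(r,f)}{r^{1-\varsigma-\varepsilon}}\right)
\]
outside the exceptional set, which is the claimed bound.

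The main technical obstacle is precisely this Borel-type growth lemma, the tropical counterpart of the key estimate that drives the Halburd--Korhonen shift analogue of the logarithmic derivative lemma in the complex plane. Its proof requires showing that for any non-decreasing continuous $T(r,f)$ with $\limsup_{r\to\infty}\log\log T(r,f)/\log r<1$, the $r$-set on which $T(r+\phi(r),f)$ exceeds $(1+o(1))T(r,f)$ has finite logarithmic measure whenever $\phi(r)\leq r^{1-\varsigma-\varepsilon'}$. Once this analytic fact is secured, the remaining pieces are elementary piecewise linear estimates and standard Nevanlinna manipulations.
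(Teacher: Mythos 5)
The paper itself gives no proof of Theorem~\ref{logder}: it is imported from Laine and Tohge \cite{lainet:11}. Measured against the standard argument there, your strategy is the right one and matches it in outline: bound $|f(x+c)-f(x)|$ by $|c|$ times the local slope, control the slope by the accumulated jumps $n(\cdot,f)+n(\cdot,1_\circ\oslash f)$, convert the unintegrated counting functions to $N$ by averaging over an interval, and finish with a Borel-type growth lemma for hyper-order less than one. The elementary steps are all correct: the slope estimate for a continuous piecewise linear function, the bound $n(R,\cdot)\leq \frac{2}{\eta R}\bigl(N((1+\eta)R,\cdot)-N(R,\cdot)\bigr)$ (note the factor $\tfrac12$ in the definition of $N$), and the Jensen bound $N(R,1_\circ\oslash f)\leq T(R,f)+O(1)$.

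The genuine gap is the analytic ingredient you defer. Because you discard the difference structure, bounding $N((1+\eta)r,\cdot)-N(r,\cdot)$ by $T((1+\eta)r,f)+O(1)$, you must take $\eta=r^{-\varsigma-\varepsilon/2}$ and hence need $T\bigl(r+r^{1-\varsigma-\varepsilon/2},f\bigr)=O(T(r,f))$ outside a set of finite logarithmic measure. That is a \emph{growing-shift} Borel lemma, strictly stronger than the paper's Lemma~\ref{technical} (which treats a fixed shift $s$), and you explicitly leave it unproved. The statement is in fact true: on the putative exceptional set $E$ iterate $r_{n+1}=\inf\bigl(E\cap[r_n+\phi(r_n),\infty)\bigr)$ with $\phi(r)=r^{1-\varsigma-\varepsilon/2}$; then $T(r_n)\geq 2^{\,n-1}T(r_1)$ together with $T(r)\leq\exp\bigl(r^{\varsigma+\epsilon_0}\bigr)$ forces $r_n\gtrsim n^{1/(\varsigma+\epsilon_0)}$, so $\sum_n\phi(r_n)/r_n<\infty$ for $\epsilon_0<\varepsilon/2$ and $E$ has finite logarithmic measure. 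So the gap is fillable, but as written the proof is incomplete. It is also avoidable with the tools already in the paper: keep the difference, fix $s=\eta r$ independent of $r$, and apply Lemma~\ref{technical} to the non-decreasing continuous function $N(\cdot,f)$ itself (whose hyper-order is at most $\varsigma$) to get $N(r+s,f)-N(r,f)=o\bigl(N(r,f)/r^{\delta}\bigr)$ for any $\delta<1-\varsigma$; this yields $n(r,f)+n(r,1_\circ\oslash f)=o\bigl(T(r,f)/r^{1-\varsigma-\varepsilon}\bigr)$ directly, and the boundary term $|f'(0)|=O(1)$ is absorbed since $T(r,f)/r^{1-\varsigma-\varepsilon}\to\infty$ for non-constant $f$. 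That is essentially how the Laine--Tohge proof closes.
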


It was confirmed in \cite{lainet:11} by introducing a suitable tropical meromorphic function of hyper-order one that the assumption $\varsigma<1$ in Theorem \ref{logder} is sharp.

The following lemma on growth properties of non-decreasing continuous real functions is from \cite{halburdkt:09} (see also \cite[Lemma 2.1]{halburdk:07PLMS}).

\begin{lemma}[\cite{halburdkt:09}]\label{technical}
Let $T:[0,+\infty)\to[0,+\infty)$ be a non-decreasing continuous
function and let $s\in(0,\infty)$. If the hyper-order of $T$ is
strictly less than one, i.e.,
    \begin{equation}\label{assu}
    \limsup_{r\to\infty}\frac{\log\log T(r)}{\log r}=\varsigma<1
    \end{equation}
and $\delta\in(0,1-\varsigma)$ then
   \begin{equation}\label{concl}
    T(r+s) = T(r)+ o\left(\frac{T(r)}{r^{\delta}}\right)
    \end{equation}
where $r$ runs to infinity outside of a set of finite logarithmic
measure.
\end{lemma}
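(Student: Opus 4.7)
The plan is to show that for each $\varepsilon>0$ the exceptional set
$$E_\varepsilon = \{r \geq r_0 : T(r+s) - T(r) > \varepsilon\, T(r)/r^\delta\}$$
has finite logarithmic measure, and then to diagonalise over $\varepsilon = 1/k$ to produce a single exceptional set yielding the $o(\cdot)$ statement \eqref{concl}. The bounded case of $T$ is trivial, so I will assume $T(r)\to\infty$, and in particular that $T\geq 1$ past some $r_0$.

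The first step is a telescoping integration of $\log T$. For $r\in E_\varepsilon$ large enough that $\varepsilon/r^\delta\leq 1$, the inequality $\log(1+x)\geq x/2$ on $[0,1]$ gives $\log T(r+s)-\log T(r)\geq \varepsilon/(2r^\delta)$. Integrating over $E_\varepsilon\cap[r_0,R]$ and using the telescoping identity
$$\int_1^R\bigl(\log T(r+s)-\log T(r)\bigr)\,dr = \int_R^{R+s}\log T(u)\,du - \int_1^{1+s}\log T(u)\,du$$
together with monotonicity of $T$, one obtains $\int_{E_\varepsilon\cap[r_0,R]}r^{-\delta}\,dr\leq(2s/\varepsilon)\log T(R+s)+O(1)$. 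The hyper-order hypothesis \eqref{assu} now gives $\log T(R+s)\leq R^{\varsigma+\eta}$ for any prescribed $\eta>0$ and all sufficiently large $R$, hence
$$\int_{E_\varepsilon\cap[r_0,R]}\frac{dr}{r^\delta}=O\bigl(R^{\varsigma+\eta}\bigr).$$

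The second step converts this Lebesgue-type estimate to finiteness of the logarithmic measure by a dyadic decomposition. For $r\in[2^n,2^{n+1}]$ one has $r^{-\delta}\geq 2^{-(n+1)\delta}$, so
$$\bigl|E_\varepsilon\cap[2^n,2^{n+1}]\bigr|\leq 2^{(n+1)\delta}\int_{E_\varepsilon\cap[2^n,2^{n+1}]}r^{-\delta}\,dr\leq C\cdot 2^{(n+1)(\varsigma+\eta+\delta)},$$
and dividing by $2^n$ gives $\int_{E_\varepsilon\cap[2^n,2^{n+1}]}dr/r\leq C'\cdot 2^{n(\varsigma+\eta+\delta-1)}$. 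Since $\delta<1-\varsigma$ by hypothesis, $\eta$ can be chosen so small that $\varsigma+\eta+\delta<1$; summing the resulting geometric series shows $\int_{E_\varepsilon}dr/r<\infty$. To obtain \eqref{concl} from a single exceptional set, pick $R_k\uparrow\infty$ with $\int_{E_{1/k}\cap[R_k,\infty)}dr/r\leq 2^{-k}$ and set $E^*=\bigcup_k(E_{1/k}\cap[R_k,\infty))$: this has finite logarithmic measure, and for $r\notin E^*$ with $r\geq R_k$ we get $T(r+s)-T(r)\leq (1/k)\,T(r)/r^\delta$, which is exactly \eqref{concl}.

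The main obstacle is precisely the conversion from the linear-measure estimate on $\int_{E_\varepsilon}r^{-\delta}dr$ (which comes almost for free from telescoping) to the logarithmic-measure estimate $\int_{E_\varepsilon}dr/r<\infty$. The dyadic decomposition exposes the decisive gap $1-\varsigma-\delta>0$ as the decay rate of a geometric series, so the sharpness of the condition $\delta<1-\varsigma$ is used in an essential and visible way at exactly this point.
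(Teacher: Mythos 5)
Your proof is correct, and it is worth noting that the paper itself offers no proof of Lemma \ref{technical}: the result is quoted from Halburd--Korhonen--Tohge \cite{halburdkt:09}, where the argument runs by contradiction, extracting from a putative exceptional set of infinite logarithmic measure a sequence $r_{n+1}\geq r_n+s$ of its points, iterating $T(r_{n+1})\geq\bigl(1+\varepsilon r_n^{-\delta}\bigr)T(r_n)$, and showing that the resulting lower bound on $\log T$ forces the hyper-order to be at least $1-\delta>\varsigma$, contradicting \eqref{assu}. Your route is genuinely different and fully self-contained: a direct Borel-type estimate, telescoping $\int\bigl(\log T(r+s)-\log T(r)\bigr)\,dr$ to get $\int_{E_\varepsilon\cap[r_0,R]}r^{-\delta}\,dr=O\bigl(\log T(R+s)\bigr)=O\bigl(R^{\varsigma+\eta}\bigr)$, then converting to logarithmic measure by the dyadic decomposition, where the convergence of the geometric series uses $\varsigma+\eta+\delta<1$ -- the same place the cited proof uses $\delta<1-\varsigma$. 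The diagonalisation over $\varepsilon=1/k$ is standard and handled correctly. What your approach buys is the avoidance of the sequence-extraction and proof by contradiction, at the cost of the dyadic bookkeeping; both proofs are of comparable length and strength.

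Two small points would make the write-up airtight. First, the telescoping step needs $T>0$ on $[r_0,r_0+s]$ so that $\log T$ is finite there; this is harmless since the case $T\equiv 0$ is vacuous and otherwise $T>0$ for all large $r$. Second, the ``bounded case'' you set aside is easy but not literally trivial: if $T\to L\in(0,\infty)$ one still needs $T(r+s)-T(r)=o\bigl(T(r)r^{-\delta}\bigr)$ off a set of finite logarithmic measure. It follows, however, from the same telescoping applied to $T$ itself (no logarithm needed), which gives $\int_{E_\varepsilon}r^{-\delta}\,dr<\infty$, and then $1/r\leq r^{-\delta}$ for $r\geq 1$ yields finite logarithmic measure directly; alternatively, normalise $T$ by the constant $2/L$ and run your main argument unchanged. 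With these remarks added, the proof of \eqref{concl} is complete.
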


By applying Theorem \ref{logder} with $c=\xi-\eta$, substituting $x\to x+\eta$ into \eqref{logderanalogue} and using Lemma~\ref{technical}, we obtain the following consequence of Theorem \ref{logder}.

\begin{corollary}\label{logdercor}
If $\varepsilon>0$, $\xi,\eta\in\R$ and $f$ is a tropical meromorphic function such that $\varsigma(f)=\varsigma<1$, then
	\begin{equation*}
	m(r,f(x+\xi)\oslash f(x+\eta))=o\left(\frac{T(r,f)}{r^{1-\varsigma-\varepsilon}}\right)
	\end{equation*}
as $r$ approaches to infinity outside of a set of finite logarithmic measure.
\end{corollary}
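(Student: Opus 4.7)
The paragraph just before the statement spells out the strategy: apply Theorem \ref{logder} with shift $c = \xi - \eta$, substitute $x \mapsto x + \eta$ in \eqref{logderanalogue}, and use Lemma \ref{technical} to absorb the resulting shift in $r$. My plan is to make these three steps precise.

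Set $c := \xi - \eta$ and $h(x) := f(x+c) \oslash f(x)$, so that the function whose proximity we wish to estimate is exactly $h(x+\eta) = f(x+\xi) \oslash f(x+\eta)$. Theorem \ref{logder}, applied to $f$ with shift $c$, yields
\[
m(r, h) = o\!\left(\frac{T(r,f)}{r^{1-\varsigma-\varepsilon}}\right)
\]
as $r \to \infty$ outside a set of finite logarithmic measure. Since $m(r, h(\cdot+\eta)) = \tfrac12(h^+(r+\eta) + h^+(-r+\eta))$, one of the two values on the right already coincides (depending on the sign of $\eta$) with the corresponding value in $m(r+|\eta|, h) = \tfrac12(h^+(r+|\eta|) + h^+(-r-|\eta|))$, while the other lies at a point within $2|\eta|$ of the target. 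The piecewise-linear continuity of $h$ bounds this discrepancy and produces a shift inequality of the form
\[
m\bigl(r, f(x+\xi) \oslash f(x+\eta)\bigr) \leq m(r+|\eta|, h) + o\!\left(\frac{T(r,f)}{r^{1-\varsigma-\varepsilon}}\right).
\]

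Finally I would apply the Theorem \ref{logder} estimate at $r + |\eta|$ in place of $r$ and invoke Lemma \ref{technical}, which, since $T(r,f)$ is non-decreasing of hyper-order $\varsigma < 1$, gives $T(r+|\eta|, f) = T(r,f) + o(T(r,f)/r^{\delta})$ for any $\delta \in (0, 1-\varsigma)$ outside a set of finite logarithmic measure. Combined with $(r+|\eta|)^{1-\varsigma-\varepsilon} \sim r^{1-\varsigma-\varepsilon}$, this delivers the desired estimate after taking the union of the (still finite-logarithmic-measure) exceptional sets. The main obstacle is the proximity-shift inequality in the previous paragraph: controlling the oscillation of $h^+$ on an interval of length $2|\eta|$ requires a careful use of the piecewise-linear structure of $f$, because the local Lipschitz constant of $h$ is not uniformly bounded in $r$ and one must show that its contribution is dominated by the target error $T(r,f)/r^{1-\varsigma-\varepsilon}$ after discarding an exceptional set.
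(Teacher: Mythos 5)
Your skeleton (set $c=\xi-\eta$, view the target function as $h(x+\eta)$ with $h(x)=f(x+c)\oslash f(x)$, shift the radius, finish with Lemma \ref{technical}) is exactly the route the paper's one-sentence proof indicates, but the step you yourself call ``the main obstacle'' is a genuine gap, and the tool you propose for it does not suffice as stated. You want to compare $h^{+}$ at the point $-r+\eta$ (say $\eta>0$) with $h^{+}$ at $-r-\eta$ by bounding the oscillation of $h^{+}$ over an interval of length $2|\eta|$ using piecewise-linear continuity. However, the slope of $h=f(\cdot+c)\oslash f$ at a point $x$ is the sum of the jumps $\omega_f(s)$ for $s$ between $x$ and $x+c$, and the only bound valid for every $x$ with $|x|\approx r$ is the total weighted number of roots and poles of $f$ up to radius about $r$, i.e.\ roughly $n(r,f)+n(r,1_\circ\oslash f)$, which can be of the same order as $T(r,f)$. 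Multiplying by $2|\eta|$ then yields an error of size $O(T(r,f))$, far larger than the target $o\bigl(T(r,f)/r^{1-\varsigma-\varepsilon}\bigr)$. So ``piecewise-linear continuity'' alone cannot close the step, and your proposal leaves precisely the heart of the corollary unproved.

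The gap can be closed in two ways. (a) Keep your oscillation strategy, but add the missing growth argument: the weighted number of roots and poles of $f$ in an interval of bounded length around $\pm r$ is $o\bigl(T(r,f)/r^{1-\varsigma-\varepsilon}\bigr)$ outside a set of finite logarithmic measure; this follows by applying Lemma \ref{technical} to the non-decreasing function $N(r,f)+N(r,1_\circ\oslash f)$, since a cluster of total weight $M$ near radius $r$ forces a difference of consecutive values of this function of size at least a constant times $M$. (b) More simply, avoid oscillation altogether: since $h^{+}(t)\leq h^{+}(t)+h^{+}(-t)=2m(|t|,h)$ for every $t\in\R$, one gets directly $m\bigl(r,f(x+\xi)\oslash f(x+\eta)\bigr)\leq m(r+|\eta|,h)+m(r-|\eta|,h)$; now apply Theorem \ref{logder} at the two radii $r\pm|\eta|$ (the set of $r$ for which $r+|\eta|$ or $r-|\eta|$ falls in the exceptional set consists of two translates of it and still has finite logarithmic measure), and conclude with the monotonicity of $T(r,f)$, Lemma \ref{technical} in the form $T(r+|\eta|,f)=T(r,f)+o\bigl(T(r,f)/r^{\delta}\bigr)$, and $(r\pm|\eta|)^{1-\varsigma-\varepsilon}\sim r^{1-\varsigma-\varepsilon}$. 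Route (b) is presumably what the paper intends; either way, the inequality you left unproved needs one of these arguments.
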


\section{Tropical holomorphic curves}\label{holosection}

In this section we extend some of key notions and results of section \ref{merosection} to the tropical projective space $\T\P^n$. The space $\T\P^n$ is given as a quotient space of  $\R_{\max}^{n+1}\setminus\{\mathbf{0}_\circ\}$ by equivalence relation~$\sim$, where $\mathbf{0}_\circ=(0_\circ,\ldots,0_\circ)$ is the zero element of $\R_{\max}^{n+1}$, and
    $$
    (a_0, a_1, \ldots , a_n) \sim (b_0, b_1, \ldots , b_n)
    $$
if and only if
    $$
    (a_0, a_1, \ldots , a_n) = \lambda \odot (b_0, b_1, \ldots , b_n):= (\lambda \odot b_0,\lambda \odot  b_1, \ldots ,\lambda \odot  b_n)
    $$
for some $\lambda \in \Bbb{R}$. We denote by $[a_0: a_1: \cdots : a_n]$ the equivalence class of $(a_0, a_1, \ldots , a_n)$. When $a_0\in\R$, we may take $(a_1\oslash a_0, \ldots , a_n \oslash a_0)\in \R_{\max}^n$ as a representative element of $[a_0: a_1: \cdots : a_n]$.  For example, $\Bbb{TP}^{1}$ is identical to the {\it completed} max-plus semiring $\R_{\max}\cup\{+\infty\}=\R\cup\{\pm \infty\}$ by the map such that
$$
[1_{\circ}:a] \mapsto a\oslash 1_{\circ} = a \text{ for } a\in\R_{\max}\,,
$$
and
$$
[0_{\circ}:a] \mapsto a \oslash 0_{\circ} =+\infty  \text{ for } a\in\R.
$$

Proposition \ref{fgh} implies that any tropical meromorphic function $f$ can be represented in the form $f=[g:h]$, where $g$ and $h$ are tropical entire functions without common roots. This concept is naturally generalized as follows.

\begin{definition}
Let
$$
f=[g_0:\cdots:g_n]:\R\to\T\P^n
$$
be a tropical {\it holomorphic} map where $g_0,\ldots,g_n$ are tropical entire functions and do not have any roots which are common to all of them.
Denote $\mathbf{f}=(g_0,\ldots,g_n):\R\to\R^{n+1}$.
Then the map $\mathbf{f}$ is called a \textit{reduced representation} of the {\it tropical holomorphic curve} $f$ in $\T\P^n$.
\end{definition}

We will now introduce a Cartan characteristic function for tropical holomorphic curves. The definition is remarkably simple.

\begin{definition}\label{dfrdef}
If $f:\R\to\T\P^n$ is a tropical holomorphic curve with a reduced representation $\mathbf{f}=(g_0,\ldots,g_n)$, then
	$$
	T_{\mathbf{f}}(r)=\frac{1}{2}\left(F(r)+F(-r)\right)-F(0), \qquad F(x)=\max\{g_0(x),\ldots,g_n(x)\},
	$$
is said to be the \textit{tropical Cartan characteristic function} of $f$.
\end{definition}

Despite the apparent simplicity of its definition, the tropical Cartan characteristic function carries all the information held in the usual tropical Nevanlinna characteristic. In order to make sure that $T_{\mathbf{f}}(r)$ is a well defined characteristic function, we first show that it is independent of the reduced representation $\mathbf{f}$ of $f$.

\begin{proposition}\label{reducedindep}
The tropical Cartan characteristic function $T_{\mathbf{f}}(r)$ is independent of the reduced representation of the tropical holomorphic curve $f$.
\end{proposition}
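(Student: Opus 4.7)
The plan is to compare two reduced representations $\mathbf{f}=(g_0,\ldots,g_n)$ and $\tilde{\mathbf{f}}=(\tilde g_0,\ldots,\tilde g_n)$ of the same tropical holomorphic curve $f:\R\to\T\P^n$ and to show that they differ by a global tropical unit, from which the equality $T_{\tilde{\mathbf{f}}}(r)=T_{\mathbf{f}}(r)$ follows by a direct computation. Since entire tropical functions are real-valued, the pointwise equivalence $[g_0(x):\cdots:g_n(x)]=[\tilde g_0(x):\cdots:\tilde g_n(x)]$ in $\T\P^n$ supplies, for each $x\in\R$, a real scalar $\lambda(x)$ with $\tilde g_i(x)=\lambda(x)+g_i(x)$ for all $i$. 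First I would record that $\lambda$ itself is continuous and piecewise linear, since $\lambda=\tilde g_i-g_i$ for any fixed index $i$.

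Next, and this is the crux, I would argue that $\lambda$ is a tropical unit, i.e. $\omega_{\lambda}(x)\equiv 0$, hence $\lambda(x)=\alpha+\beta x$ for some $\alpha,\beta\in\R$. At any $x_0\in\R$ we have $\omega_{\tilde g_i}(x_0)=\omega_{\lambda}(x_0)+\omega_{g_i}(x_0)$. If $\omega_{\lambda}(x_0)<0$, then the entireness of each $\tilde g_i$ forces $\omega_{g_i}(x_0)\geq-\omega_{\lambda}(x_0)>0$ for every $i$, making $x_0$ a common root of all $g_i$, which contradicts reducedness of $\mathbf{f}$. Swapping the roles of the two representations (equivalently using $g_i=-\lambda+\tilde g_i$) and invoking reducedness of $\tilde{\mathbf{f}}$ rules out $\omega_{\lambda}(x_0)>0$ as well. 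Thus $\omega_{\lambda}\equiv 0$, yielding the asserted affine form.

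Finally I would substitute back into Definition \ref{dfrdef}. Writing $\tilde F(x)=\max_i\tilde g_i(x)=\alpha+\beta x+\max_i g_i(x)=\alpha+\beta x+F(x)$, one gets
\begin{equation*}
T_{\tilde{\mathbf{f}}}(r)=\tfrac{1}{2}\bigl(\tilde F(r)+\tilde F(-r)\bigr)-\tilde F(0)=\tfrac{1}{2}\bigl(F(r)+F(-r)\bigr)-F(0)=T_{\mathbf{f}}(r),
\end{equation*}
as the $\beta r$ and $-\beta r$ contributions cancel and the two copies of $\alpha$ are absorbed by $-\tilde F(0)=-\alpha-F(0)$.

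The expected main obstacle is the middle step: making the passage from a pointwise scaling $\lambda(x)$ to a genuine tropical unit rigorous. The argument rests entirely on the interplay between the entireness of $g_i$ and $\tilde g_i$ (forcing $\omega_\lambda$ to have definite sign at a potential kink) and the fact that both representations are reduced (preventing any common root to absorb such a kink). Once $\lambda$ is identified as affine, the final computation is immediate.
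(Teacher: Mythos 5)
Your proposal is correct and follows essentially the same route as the paper: extract the pointwise scalar $\lambda(x)=\tilde g_i(x)-g_i(x)$, use entireness together with the absence of common roots in each reduced representation to rule out roots and poles of $\lambda$, conclude that $\lambda$ is affine, and then cancel the $\alpha x+\beta$ contribution in the definition of $T_{\mathbf{f}}(r)$. Your middle step even spells out the sign bookkeeping with $\omega_{\lambda}$ a bit more explicitly than the paper does, but the argument is the same.
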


\begin{proof}
Take any two reduced representations
$$
\mathbf{f}:=(g_0,g_1, \ldots , g_n) \quad \text{ and } \quad
\widetilde{\mathbf{f}}:=(\tilde{g}_0, \tilde{g}_1, \ldots , \tilde{g}_n)
$$
of a given tropical holomorphic curve $f:\R\to\T\P^{n}$.
By definition (of projective coordinates), it follows directly that
for each point $x\in\R$, there is $\lambda\in\R$ depending on $x$,
that is, $\lambda=\lambda(x)$, such that
$$
\tilde{g}_j(x)=g_j(x)+\lambda(x)
$$
holds for $0\leq j \leq n$.
Recall that $g_0(x),\ldots,g_n(x)$ is a set of $n+1$ tropical entire functions
with no common roots, and the same is true for $\tilde{g}_0(x),\ldots,\tilde{g}_n(x)$.
Hence, for all $j\in\{0,\ldots,n\}$, the function $\lambda(x)\equiv \tilde{g}_j(x)-g_j(x)$ is tropical meromorphic such that
$\omega_{\lambda}(x)\equiv \omega_{\tilde{g}_j}(x)-\omega_{g_j}(x)$.
If $\lambda(x)$ had either a root or a pole, that point would be a common root of
the $\tilde{g}_j(x)$ or the $g_j(x)$, respectively, which is a contradiction.
Hence $\lambda(x)$ can have neither roots nor poles, so that it is
a linear function, say $\lambda(x)=\alpha x+\beta$ on $\R$.

We have so far shown that $\tilde g_j(x)=g_j(x)+\alpha x + \beta$, where $\alpha$ and $\beta$ are real constants that do not depend on
$j\in\{0,\ldots,n\}$ or $x\in\R$, and certainly not on $r=|x|$. Therefore, we have
	$$
	\tilde F(x)=\max\{\tilde g_0(x),\ldots,\tilde g_n(x)\} = F(x)+\alpha x + \beta,
	$$
and
    $$
    \tilde{F}(0)=F(0)+\beta.
    $$
Hence,
	$$
	\frac{1}{2}\Bigl(\tilde F(r)+\tilde F(-r)\Bigr)-\tilde{F}(0)=\frac{1}{2}\Bigl(F(r)+F(-r)\Bigr) -F(0),
	$$
which means that the characteristic functions $T_{\mathbf{f}}(r)$ and $T_{\tilde{\mathbf{f}}}(r)$ of $f$ coincide.
\end{proof}

%motivation for the definition: integration <-> taking averages

By Proposition \ref{reducedindep} the tropical Cartan characteristic function is independent of the reduced representation $\mathbf{f}$ of the tropical holomorphic curve $f$. For this reason we will adopt the notation $T_f(r)$ instead of $T_{\mathbf{f}}(r)$ from now on. The following proposition shows that $T_{f}(r)$ coincides, up to a constant, with the Nevanlinna characteristic introduced in \cite{halburds:09,lainet:11} when $f=[g:h]:\R\to\T\P^1$.

\begin{proposition}\label{consistency}
If $f=h\oslash g$ is a tropical meromorphic function such that $f(0)=1_\circ=0$, then
	$$
	T_{\mathbf{f}}(r)=T_{f}(r)=T(r,f),	
	$$
where $\mathbf{f}=(g,h)$ is a reduced representation of the holomorphic curve $f:\R\to\T\P^1$ given by $f=[g:h]$.
\end{proposition}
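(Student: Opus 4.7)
The key identity is $F(x)=\max\{g(x),h(x)\}=g(x)+\max\{0,h(x)-g(x)\}=g(x)+f^+(x)$, which follows immediately from $f(x)=h(x)-g(x)$. The plan is to plug this into Definition~\ref{dfrdef} and then show that the entire-function part contributes exactly $N(r,f)$.

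First I would substitute $F=g+f^+$ into the definition of $T_{\mathbf{f}}(r)$ and split the result into two pieces:
\begin{equation*}
T_{\mathbf{f}}(r)=\tfrac{1}{2}\bigl(f^+(r)+f^+(-r)\bigr)-f^+(0)+\tfrac{1}{2}\bigl(g(r)+g(-r)\bigr)-g(0).
\end{equation*}
Since $f(0)=0$ by hypothesis, $f^+(0)=0$, so the first three terms collapse to $m(r,f)$. What remains is to establish
\begin{equation*}
\tfrac{1}{2}\bigl(g(r)+g(-r)\bigr)-g(0)=N(r,f).
\end{equation*}

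For this, I would express $g(r)+g(-r)-2g(0)$ as an integral. Using $g(r)-g(0)=\int_0^r g'(t)\,dt$ and the substitution $t\mapsto -t$ in $g(-r)-g(0)=-\int_{-r}^0 g'(t)\,dt=-\int_0^r g'(-t)\,dt$, we get
\begin{equation*}
\tfrac{1}{2}\bigl(g(r)+g(-r)\bigr)-g(0)=\tfrac{1}{2}\int_0^r\bigl(g'(t)-g'(-t)\bigr)\,dt,
\end{equation*}
where $g'$ denotes, say, the right derivative. The piecewise-linear structure gives $g'(t)-g'(-t)=\sum_{-t<s\leq t}\omega_g(s)$, and this equals $\sum_{|s|\leq t}\omega_g(s)$ almost everywhere in $t$. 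Now the no-common-roots assumption enters: because $\omega_g(s)>0$ and $\omega_h(s)>0$ cannot both occur and $\omega_f=\omega_h-\omega_g$, the poles of $f$ are exactly the roots of $g$, with $|\omega_f(s)|=\omega_g(s)$ at such points. Since $g$ is tropical entire, $\omega_g(s)\geq 0$ everywhere, and we conclude $g'(t)-g'(-t)=n(t,f)$ a.e., so the integral identity yields $N(r,f)$.

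The main obstacle is the bookkeeping in the last step: one must use both that $g$ is tropical entire (so $\omega_g\geq 0$ and contributes no poles of its own) and that $g,h$ have no common roots (so no cancellation occurs in $\omega_f=\omega_h-\omega_g$ at points where either is positive). Once these two facts are combined, the identification of the integrand with $n(t,f)$ is transparent and completes the proof.
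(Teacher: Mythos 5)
Your proof is correct, and it differs from the paper's in its second half. Both arguments open identically, with the decomposition $F(x)=\max\{g(x),h(x)\}=g(x)+(h-g)^{+}(x)$, which reduces everything to showing $\tfrac12\bigl(g(r)+g(-r)\bigr)-g(0)=N(r,f)$. At that point the paper stays at the level of the standard tropical Nevanlinna functions: it writes $\tfrac12\bigl(g(r)+g(-r)\bigr)=m(r,g)-m(r,1_\circ\oslash g)$, invokes the tropical Jensen formula \eqref{tPJ}, and then uses $N(r,g)\equiv 0$ (entirety of $g$) together with $N(r,1_\circ\oslash g)=N(r,h\oslash g)$ (no common roots) to land on $T_f(r)=T(r,f)-f^{+}(0)$, the normalization $f(0)=1_\circ$ removing the constant. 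You instead bypass the Jensen formula and effectively re-prove it for the entire function $g$: integrating the right derivative, identifying $g'(t)-g'(-t)$ with $\sum_{|s|\le t}\omega_g(s)$ almost everywhere, and then converting that sum into $n(t,f)$ via $\omega_f=\omega_h-\omega_g$, the nonnegativity of $\omega_g,\omega_h$, and the no-common-roots hypothesis. The paper's route is shorter because the Jensen formula is already available as cited machinery; your integration argument is more self-contained and has the merit of making explicit, at the level of the weights $\omega$, exactly where entirety of $g$ and the absence of common roots of $g$ and $h$ enter. The only other (cosmetic) difference is that you impose the normalization $f(0)=1_\circ$ from the outset, whereas the paper first derives the unnormalized identity $T_f(r)=T(r,f)-f^{+}(0)$.
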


\begin{proof}
The function $F(x)$ in the definition of $T_{\mathbf{f}}(r)=T_f(r)$ can be written in the form
	\begin{equation}\label{F}
	F(x) = (h-g)^+(x)+g(x)
	\end{equation}
for all $x \in \R$. By applying \eqref{F} with $x=\pm r$, it follows that
	\begin{equation}\label{Tf}
	T_{f}(r)=m(r,h-g)+\frac{1}{2}g(r)+\frac{1}{2}g(-r)-(h-g)^{+}(0)-g(0).
	\end{equation}
Since
	$$
	g(x)=g^+(x)-(-g)^+(x)
	$$	
for all $x\in\R$, it follows, in particular, that
	$$
	g(r)+g(-r)=g^+(r)+g^+(-r)-(-g)^+(r) -(-g)^+(-r),
	$$	
and so
	\begin{equation}\label{fromtPJ}
	\frac{1}{2}g(r)+\frac{1}{2}g(-r)=m(r,g)-m(r,1_\circ\oslash g).
	\end{equation}
Moreover, by the tropical Jensen formula \eqref{tPJ}, we have
	\begin{equation}\label{fromtPJ2}
	m(r,g)-m(r,1_\circ\oslash g)=N(r,1_\circ\oslash g)-N(r,g)+g(0).
	\end{equation}
By combining \eqref{Tf} with \eqref{fromtPJ} and \eqref{fromtPJ2} it follows that
	\begin{equation}\label{Tf2}
	T_{f}(r)=m(r,h \oslash g)-N(r,g)+N(r,1_\circ\oslash g)-(h-g)^{+}(0).
	\end{equation}
The counting function $N(r,g)$ vanishes identically since $g$ is entire. Furthermore, since the tropical entire functions $g$ and $h$ do not have any common roots, it follows that
    $$
    N(r,1_\circ\oslash g)=N(r,h\oslash g).
    $$
Hence \eqref{Tf2} becomes the desired formula
	$$
	T_{f}(r)=T(r,h\oslash g)-(h\oslash g)^{+}(0)=T(r,f)-f^{+}(0).
	$$
\end{proof}

\begin{remark} By the normalization $f(0)=1_\circ$, we have just obtained
    $$
    T_{f}(r)=T(r,f),
    $$
while without the normalization, we have
    $$
    T_{f}(r)=T(r,f)-f^{+}(0).
    $$
\end{remark}

Proposition \ref{consistency} shows that the characteristic functions $T_{f}(r)$ and $T(r,f)$ are equal up to a constant if $f:\R\to\T\P^1$. Hence in the one-dimensional case the hyper-order \eqref{hyperorder}, for instance, may be defined using $T_{f}(r)$ instead of $T(r,f)$. In the case when $f:\R\to\T\P^n$ for $n\geq 2$ we can still obtain a useful inequality in terms of tropical linear combinations of the coordinates of $f$. In order to state this inequality we introduce a new counting function of the common roots of two given tropical entire functions.

\begin{definition}
Let $h_1$ and $h_2$ be tropical entire functions. We define
$$
N_{\min}\left(r,0,h_1,h_2\right)
=\min\bigl\{N(r, 1_{\circ}\oslash h_1), N(r, 1_{\circ}\oslash h_2)\bigr\}.
$$
\end{definition}
%
%
% In order to state this inequality we introduce a new counting function
%$$
%N\bigl(r, 1_{\circ}\oslash (h_1 \wedge h_2)\bigr)
%=\min\bigl\{N\bigl(r, 1_{\circ}\oslash h_1\, \bigr), N\bigl(r, 1_{\circ}\oslash h_2\, \bigr) \bigr\},
%$$
%which coincides with the counting function of common roots of two tropical entire functions $h_1$ and $h_2$.

\begin{lemma}\label{Tlemma}
If $g=[g_0:\cdots:g_n]:\R\to\T\P^n$ is a tropical holomorphic curve, then
	$$
	T\left(r,\hat f \oslash \tilde f \right)+N_{\min}\left(r,0,\hat{f},\tilde{f}\right) \leq T_{g}(r)+C+\max\{g_0(0),\ldots,g_n(0)\}-\tilde{f}(0),
    $$
where $\hat f$ and $\tilde f$ are linear combinations of the $n+1$ tropical entire functions $g_0,\ldots,g_n$ without common roots and $C$ is the maximum of the coefficients of these two linear combinations over $\R_{\max}$.
\end{lemma}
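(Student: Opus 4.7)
The plan is a three-step reduction: (i) bound $\max\{\hat f,\tilde f\}$ pointwise by $C+F$, where $F(x)=\max\{g_0(x),\ldots,g_n(x)\}$; (ii) feed this into $m(r,\hat f\oslash\tilde f)$ and use the tropical Jensen formula for the tropical entire function $\tilde f$; (iii) split $N(r,1_\circ\oslash\tilde f)$ as $N(r,\hat f\oslash\tilde f)$ plus the common-root contribution $N_{\min}(r,0,\hat f,\tilde f)$.

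First, writing $\hat f=\bigoplus_{i=0}^n\hat a_i\odot g_i$ and $\tilde f=\bigoplus_{i=0}^n\tilde a_i\odot g_i$, so that $C=\max_i\max\{\hat a_i,\tilde a_i\}$, one has
\[
\max\{\hat f,\tilde f\}(x)=\max_i\bigl(\max\{\hat a_i,\tilde a_i\}+g_i(x)\bigr)\leq C+F(x).
\]
Therefore $(\hat f-\tilde f)^+(x)=\max\{\hat f,\tilde f\}(x)-\tilde f(x)\leq C+F(x)-\tilde f(x)$, and averaging at $x=\pm r$ while using $\tfrac12\bigl(F(r)+F(-r)\bigr)=T_g(r)+F(0)$ from Definition \ref{dfrdef} produces
\[
m(r,\hat f\oslash\tilde f)\leq C+T_g(r)+F(0)-\tfrac12\bigl(\tilde f(r)+\tilde f(-r)\bigr).
\]

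Second, $\tilde f$ is tropical entire as a pointwise maximum of tropical entire functions, so $N(r,\tilde f)=0$. Splitting $\tilde f=\tilde f^+-(-\tilde f)^+$ identifies $\tfrac12(\tilde f(r)+\tilde f(-r))$ with $m(r,\tilde f)-m(r,1_\circ\oslash\tilde f)$, and then the tropical Jensen formula \eqref{tPJ} applied to $\tilde f$ collapses the right-hand side to $\tilde f(0)+N(r,1_\circ\oslash\tilde f)$. Substituting yields
\[
m(r,\hat f\oslash\tilde f)+N(r,1_\circ\oslash\tilde f)\leq T_g(r)+C+F(0)-\tilde f(0).
\]

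Third, since $\hat f$ and $\tilde f$ are entire, $\omega_{\hat f-\tilde f}(s)=\omega_{\hat f}(s)-\omega_{\tilde f}(s)$, so each pole of $\hat f\oslash\tilde f$ at $s$ has multiplicity $\omega_{\tilde f}(s)-\min\{\omega_{\hat f}(s),\omega_{\tilde f}(s)\}$. Summing over $s$ and integrating produces the identity
\[
N(r,1_\circ\oslash\tilde f)=N(r,\hat f\oslash\tilde f)+N_{\min}(r,0,\hat f,\tilde f),
\]
in which $N_{\min}$ absorbs precisely the common-root contribution. Inserting this identity into the display of the second step delivers the claim. The main obstacle is the multiplicity bookkeeping of the third step: one must case-split on the sign of $\omega_{\hat f}(s)-\omega_{\tilde f}(s)$ at each root $s$ of $\tilde f$, and verify that the residual term reconstitutes $N_{\min}(r,0,\hat f,\tilde f)$ exactly; the coprimality hypothesis on $\hat f,\tilde f$ ensures that this bookkeeping does not overshoot.
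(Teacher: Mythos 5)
Your steps (i)--(ii) are correct, and your route is genuinely different from the paper's: the paper constructs an auxiliary tropical entire function $u$ with $\omega_u(x)=\min\{\omega_{\hat f}(x),\omega_{\tilde f}(x)\}$ and $u(0)=\tilde f(0)$, writes $\hat f=u\odot\hat w$, $\tilde f=u\odot\tilde w$ with $\hat w,\tilde w$ having no common roots, evaluates $T(r,\hat w\oslash\tilde w)$ through Proposition \ref{consistency}, and only then uses $\max\{\hat f,\tilde f\}\le C+F$ and the Jensen formula \eqref{tPJ} applied to $u$. You instead bound $m(r,\hat f\oslash\tilde f)$ directly from the same pointwise estimate and apply Jensen to $\tilde f$ itself, then split $N(r,1_\circ\oslash\tilde f)$ by the pointwise identity $\omega_{\tilde f}(s)=(\omega_{\tilde f}(s)-\omega_{\hat f}(s))^{+}+\min\{\omega_{\hat f}(s),\omega_{\tilde f}(s)\}$. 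This is leaner (no auxiliary $u$, no appeal to Proposition \ref{consistency}), and the residual term it produces, namely $\tfrac12\int_0^r\sum_{|s|\le t}\min\{\omega_{\hat f}(s),\omega_{\tilde f}(s)\}\,dt$, is exactly the paper's $N(r,1_\circ\oslash u)$: the two arguments land in the same place.

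The caveat is your step (iii) identity $N(r,1_\circ\oslash\tilde f)=N(r,\hat f\oslash\tilde f)+N_{\min}(r,0,\hat f,\tilde f)$. With the paper's displayed definition $N_{\min}(r,0,\hat f,\tilde f)=\min\{N(r,1_\circ\oslash\hat f),N(r,1_\circ\oslash\tilde f)\}$ this is false in general, since a sum of pointwise minima of multiplicities is only $\le$ the minimum of the two integrated counting functions; equality requires one function's root divisor to dominate the other's everywhere. In fact the lemma with that literal reading of $N_{\min}$ fails: take $n=1$, $g_0(x)=\max\{x,0\}$, $g_1(x)=\max\{x-1,0\}$, $\hat f=g_0$, $\tilde f=g_1$; then $T(r,\hat f\oslash\tilde f)=T_g(r)=r/2$ and all constants on the right vanish, while $\min\{N(r,1_\circ\oslash g_0),N(r,1_\circ\oslash g_1)\}=(r-1)/2>0$. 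So your identity, and the lemma, are correct only when $N_{\min}$ is understood as the counting function of \emph{common} roots counted with multiplicity $\min\{\omega_{\hat f},\omega_{\tilde f}\}$ --- which is what the prose before the definition intends, what the paper's own proof actually establishes (it derives the bound with $N(r,1_\circ\oslash u)$ and then simply asserts the statement), and all that is used later (in Theorem \ref{tcartan2} the term is discarded anyway). So your proof is on the same footing as the paper's; just state the conclusion with the common-root counting function rather than the displayed minimum of counting functions. Finally, your closing remark misreads the hypothesis: ``without common roots'' refers to $g_0,\ldots,g_n$ of the reduced representation, not to $\hat f,\tilde f$, and coprimality of $\hat f,\tilde f$ would not rescue the literal identity anyway, since it makes the common-root term $0$ while the displayed $N_{\min}$ can still be positive.
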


\begin{proof}
Let $u$ be a tropical entire function that satisfies
	\begin{equation}\label{omegau}
	\omega_u(x)=\min\{\omega_{\hat f}(x),\omega_{\tilde f}(x)\}
	\end{equation}
for all $x\in\R$, and also $u(0)=\tilde{f}(0)$. Let $\hat w := \hat f \oslash u$ and $\tilde w := \tilde f \oslash u$. Since $\omega_{\hat w}=\omega_{\hat f}-\omega_{u}\geq 0$ and $\omega_{\tilde w}=\omega_{\tilde f}-\omega_{u}\geq 0$ by definition, it follows that $\hat w$ and $\tilde w$ are tropical entire functions such that $\tilde{w}(0)=\tilde{f}(0)-u(0)=1_\circ$. If $\hat w$ and $\tilde w$ have a common root, say $x_0$, then $\omega_{\hat w}(x_0)>0$ and $\omega_{\tilde w}(x_0)>0$. But this implies that $\omega_{\hat f}(x_0)>\omega_u(x_0)$ and $\omega_{\tilde f}(x_0)>\omega_u(x_0)$, which contradicts \eqref{omegau}. We have therefore shown that there exist tropical entire functions $\hat w$, $\tilde w$ and $u$ such that  $\hat f  = u \odot \hat w$ and $\tilde f  = u \odot \tilde w$, and such that $\hat w$ and $\tilde w$ do not have any common roots.

By Proposition \ref{consistency} it follows that
	\begin{equation}\label{key1}
	T\left(r,\hat f \oslash \tilde f \right) = T\left(r,\hat w \oslash \tilde w \right) = \frac{1}{2}\left(G(r)+G(-r)\right)-G(0),
	\end{equation}
where $G(x)=\max\{\hat w(x),\tilde w(x)\}$. Note that $G(0)\geq 0$. Since $\hat f$ and $\tilde f$ are tropical linear combinations of $g_0,\ldots,g_n$, it follows that
	\begin{equation}\label{key2}
	\max\{\hat f(x),\tilde f(x)\} \leq F(x)+C,
	\end{equation}
where $F(x)=\max\{g_0(x),\ldots,g_n(x)\}$. Therefore, by combining Definition \ref{dfrdef} with equations \eqref{key1} and \eqref{key2}, we have
	\begin{equation}\label{key3}
	T\left(r,\hat f \oslash \tilde f \right) \leq T_{g}(r)+F(0)- \frac{1}{2}\left(u(r)+u(-r)\right)+C.
	\end{equation}
The tropical Jensen formula \eqref{tPJ} yields
	\begin{equation*}
	\frac{1}{2}\left(u(r)+u(-r)\right) = m(r,u)-m(r,1_\circ\oslash u)=N(r,1_\circ\oslash u)-N(r,u)+u(0),
	\end{equation*}
where the counting function $N(r,u)$ vanishes identically due to the fact that $u$ is entire. Therefore \eqref{key3} becomes
	\begin{equation*}
	T\left(r,\hat f \oslash \tilde f \right) \leq T_{g}(r)-N(r,1_\circ\oslash u)+C+\max\{g_0(0),\ldots,g_n(0)\}-\tilde{f}(0),
	\end{equation*}
which implies the assertion.
\end{proof}

\section{Tropical Casoratian}

The tropical Casorati determinant plays the role of the Wronskian in the tropical analogue of Cartan's second main theorem in section \ref{cartansection} below. In this short section we describe some of the basic properties of the tropical Casoratian.

Let $g(x)$ be a tropical entire function, and choose $c\in\R\setminus\{0\}$ that will be fixed from now on. In fact, using the transformation $x\mapsto cx$ we can take $c=1$ in the following considerations without any loss of generality. We denote, for $n\in\N$,
    \begin{equation*}
    g(x)\equiv g, \quad g(x+c)\equiv \overline{g},\quad g(x+2c)\equiv
    \overline{\overline{g}}\quad \textrm{and} \quad g(x+nc)\equiv
    \overline{g}^{[n]}.
    \end{equation*}
The \textit{tropical Casorati determinant}, or \textit{tropical Casoratian}, of tropical entire functions $g_0,\ldots,g_n$ is defined by
	\begin{equation}\label{C1}
	C_\circ(g_0,g_1,\ldots,g_n) = \bigoplus \overline{g}_0^{[\pi(0)]} \odot \overline{g}_1^{[\pi(1)]}  \odot \cdots \odot \overline{g}_n^{[\pi(n)]},
	\end{equation}
where the sum is taken over all permutations $\{\pi(0),\ldots,\pi(n)\}$ of $\{0,\ldots,n\}$. It satisfies the following simple properties.

\begin{lemma}\label{casoratilemma}
If $g_0,\ldots,g_n$ and $h$ are tropical entire functions, then
	\begin{enumerate}
	\item[(i)] $C_\circ(g_0,g_1,\ldots,g_i,\ldots,g_j,\ldots,g_n)=C_\circ(g_0,g_1,\ldots,g_j,\ldots,g_i,\ldots,g_n)$ for all $i,j\in\{0,\ldots,n\}$ such that $i\not=j$.
	\item[(ii)] $C_\circ(1_\circ,g_1,\ldots,g_n)\geq C_\circ(\overline{g}_1,\ldots,\overline{g}_n)$.
	\item[(iii)] $C_\circ(0_\circ,g_1,\ldots,g_n)=0_\circ$.
	\item[(iv)] $C_\circ(g_0\odot h,g_1 \odot h,\ldots,g_n\odot h)=h\odot\overline{h}\odot\cdots\odot \overline{h}^{[n]}\odot C_\circ(g_0,g_1,\ldots,g_n)$.
	\end{enumerate}
\end{lemma}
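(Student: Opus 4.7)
My plan is to prove each of the four identities by direct manipulation of the permutation sum in definition~\eqref{C1}; this is bookkeeping rather than deep mathematics, so I would handle (i), (iii), (iv) quickly and spend most of the effort on (ii). For part~(i), I would observe that the tropical Casoratian is really a permanent: swapping the arguments $g_i\leftrightarrow g_j$ replaces the generic summand $\overline{g}_0^{[\pi(0)]}\odot\cdots\odot\overline{g}_n^{[\pi(n)]}$ by the summand of the unchanged expression indexed by $\pi\circ\tau$, where $\tau$ is the transposition $(i\,j)$. Since $\pi\mapsto\pi\circ\tau$ is a bijection of the symmetric group and $\R_{\max}$ admits no negation, the tropical sum is unaffected. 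Part~(iii) is immediate: every summand contains a factor $\overline{0_\circ}^{[\pi(0)]}=0_\circ$, which is absorbing for $\odot$, so each summand equals $0_\circ$ and hence so does their tropical sum.

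For part~(iv), the idea is to distribute $\odot$ inside each summand. Since time-shifting commutes with tropical multiplication, $\overline{(g_k\odot h)}^{[\pi(k)]}=\overline{g}_k^{[\pi(k)]}\odot\overline{h}^{[\pi(k)]}$, and hence each summand of $C_\circ(g_0\odot h,\ldots,g_n\odot h)$ factors as
\[
\Bigl(\bigodot_{k=0}^n \overline{g}_k^{[\pi(k)]}\Bigr)\odot\Bigl(\bigodot_{k=0}^n \overline{h}^{[\pi(k)]}\Bigr).
\]
Because $\pi$ is a permutation of $\{0,\ldots,n\}$, the second factor equals $h\odot\overline{h}\odot\cdots\odot\overline{h}^{[n]}$ regardless of $\pi$, so it pulls out of the tropical sum and leaves $C_\circ(g_0,\ldots,g_n)$ behind.

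Part~(ii) is the only inequality and is where I would take most care. Using $\overline{1_\circ}^{[k]}\equiv 1_\circ$, the left-hand side simplifies to $\bigoplus_\pi \overline{g}_1^{[\pi(1)]}\odot\cdots\odot\overline{g}_n^{[\pi(n)]}$ with $\pi$ ranging over all permutations of $\{0,1,\ldots,n\}$. I would split this tropical sum according to the value of $\pi(0)$: the slice with $\pi(0)=0$ consists of exactly those $\pi$ for which $(\pi(1),\ldots,\pi(n))$ runs over the permutations of $\{1,\ldots,n\}$, so rewriting $\overline{g}_j^{[\pi(j)]}=\overline{\overline{g}_j}^{[\pi(j)-1]}$ identifies this partial sum with $C_\circ(\overline{g}_1,\ldots,\overline{g}_n)$. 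The full tropical sum, being a max over a larger index set, dominates this sub-sum, which yields the claimed inequality.

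The main obstacle, and the step I would double-check most carefully, is precisely this index bookkeeping in~(ii): the $(n+1)$-input Casoratian on the left uses shifts $\{0,\ldots,n\}$, while the $n$-input Casoratian on the right uses shifts $\{0,\ldots,n-1\}$ applied to the already once-shifted functions $\overline{g}_j$, which corresponds to shifts $\{1,\ldots,n\}$ on the $g_j$. Once this identification is fixed, the ``$\pi(0)=0$'' slice on the left is literally the right-hand side of~(ii), and the inequality drops out.
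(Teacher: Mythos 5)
Your proposal is correct and takes essentially the same approach as the paper: (i) and (iii) are the same direct bookkeeping, your ``$\pi(0)=0$ slice'' in (ii) is precisely the paper's expansion of $C_\circ(g_0,\ldots,g_n)$ along the first column followed by setting $g_0=1_\circ$ and using that the tropical sum dominates each term, and in (iv) you simply collect the factors $\overline{h}^{[\pi(k)]}$ using that $\pi$ is a permutation, which is the same computation the paper performs after first transposing the tropical permanent.
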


\begin{proof}
Property (i) is obtained by changing the order of summation in the determinant, and property (iii) follows by substituting $0_\circ=-\infty$ to the definition of the tropical Casoratian.

To verify property (ii), we expand $C_{\circ}(g_0, g_1, \ldots , g_n)$ according to the first column vector to obtain
$$
C_{\circ}(g_0, g_1,\ldots, g_n)
=g_0\odot C_{\circ}(\overline{g}_1,\ldots,\overline{g}_n)\oplus \cdots \oplus \overline{g}^{[n]}_0\odot C_{\circ}(g_1,,\ldots, g_n),
$$
from which the assertion follows by taking $g_0=1_\circ$.

To show that (iv) is valid, we observe that since the tropical determinant is invariant under transposing, it follows from \eqref{C1} that
	\begin{equation}\label{C2}
	C_\circ(g_0,g_1,\ldots,g_n) = \bigoplus \overline{g}_{\pi(0)}^{[0]} \odot \overline{g}_{\pi(1)}^{[1]}  \odot \cdots \odot \overline{g}_{\pi(n)}^{[n]}.
	\end{equation}
By a tropical side by side multiplication of \eqref{C2} with $h\odot \overline{h}\odot\cdots\odot\overline{h}^{[n]}$, we have
	\begin{equation*}
	\begin{split}
	h\odot \overline{h} & \odot\cdots\odot\overline{h}^{[n]}\odot C_\circ(g_0,g_1,\ldots,g_n) \\& = \bigoplus (\overline{g_{\pi(0)}\odot h})^{[0]} \odot (\overline{g_{\pi(1)}\odot h})^{[1]}  \odot \cdots \odot (\overline{g_{\pi(n)}\odot h})^{[n]}\\&= \bigoplus (\overline{g_{0}\odot h})^{[\pi(0)]} \odot (\overline{g_{1}\odot h})^{[\pi(1)]}  \odot \cdots \odot (\overline{g_{n}\odot h})^{[\pi(n)]}\\ &=C_\circ(g_0\odot h,g_1 \odot h,\ldots,g_n\odot h).
	\end{split}
	\end{equation*}
\end{proof}

\begin{remark}
The properties (i)-(iv) of Lemma \ref{casoratilemma} are possessed by the ordinary Casoratian $C(g_0, g_1, \ldots , g_n)$ in the following similar form:
\smallskip
    \begin{enumerate}
    \item $C(g_0, g_1, \ldots , g_i, \ldots , g_j, \ldots, g_n) = - C(g_0, g_1, \ldots , g_j, \ldots , g_i, \ldots, g_n)$ for all $i,j \in \{0, \ldots , n\}$.
    \item $C(1, g_1, \ldots , g_n)=C(\Delta g_1, \ldots , \Delta g_n)$ with $\Delta g= \overline{g} - g$.
    \item $C(0, g_1, \ldots , g_n)=0$.
    \item $C(h g_0, h g_1,  \ldots , h g_n)(x)=\prod_{k=0}^{n} h(x + k) C(g_0, g_1,  \ldots , g_n)(x)$.
    \end{enumerate}
\smallskip
Here $g_0, g_1, \ldots , g_n$ and $h$ are often assumed to be entire or meromorphic functions in the complex plane, for instance.
\end{remark}

Concerning Lemma \ref{casoratilemma} (ii), an opposite inequality does not appear to hold in general.
At each $x\in \mathbb{R}$, there exists a permutation $\pi_x(j)$ of $\{0,1, \ldots , n\}$ such that
$$
C_{\circ}(1_{\circ}, g_1, \ldots , g_n)(x)=\overline{1_{\circ}}^{[\pi_x(0)]}(x)\odot\overline{g}_1^{[\pi_x(1)]}(x) \odot \cdots \odot\overline{g}_n^{[\pi_x(n)]}.
$$
If $\pi_x(0)=0$, then $\pi_x(j)$ $(1\leq j \leq n)$ is a permutation of $\{1,\ldots , n\}$ and therefore we have
$$
C_{\circ}(1_{\circ}, g_1, \ldots , g_n)(x)
=\overline{g}_1^{[\pi_x(1)]}(x)\odot \cdots \odot \overline{g}_n^{[\pi_x(n)]}
\leq C_{\circ}(\overline{g}_1, \ldots ,\overline{g}_n)(x),
$$
while if $\pi(0)=n$, $\pi_x(j+1)$ $(0\leq j \leq n-1)$ is a permutation of $\{0,\ldots , n-1\}$ and therefore we have
$$
C_{\circ}(1_{\circ}, g_1, \ldots , g_n)(x)
=\overline{g}_1^{[\pi_x(1)]}(x)\odot \cdots \odot \overline{g}_n^{[\pi_x(n)]}
\leq C_{\circ}(g_1, \ldots , g_n)(x).
$$
But otherwise, it is unlikely that any similar expression to the two cases can be found unless we restrict the forms of $g_1,\ldots,g_n$.
%Note that our discussion for the proof of Theorem~\ref{tcartan} in Section~\ref{cartansection} will not use this type of inequalities at all.

\section{Tropical version of Cartan's second main theorem}\label{cartansection}

In this section we present the main result of this study, which is a tropical analogue of Cartan's second main theorem for holomorphic curves in $\P^n(\C)$. Cartan's original result \cite{cartan:33} from 1933 is a natural generalization of Nevanlinna's second main theorem for meromorphic functions in $\C$. At the end of this section we show how our tropical version of Cartan's second main theorem implies a tropical second main theorem due to Laine and Tohge. First, we recall the second main theorem by Cartan in the following form.

\begin{theorem}[Cartan \cite{cartan:33}]\label{classicalcartan}
Let $f:\C\to\P^n(\C)$ be a linearly non-degenerate holomorphic curve. Let $H_j$, $j=0,\ldots,q$, be $q+1$ hyperplanes of $\P^n(\C)$ in general position. Then
    $$
    (q-n)T_f(r) \leq \sum_{j=0}^q N_f(r,H_j) - N_W(r,0) + O(\log^+(rT_f(r)))
    $$
as $r\to\infty$ outside of a set of finite linear measure.
\end{theorem}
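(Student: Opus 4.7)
The plan is to follow Cartan's original argument centered on the Wronskian of a reduced representation. Fix a reduced representation $\mathbf{f}=(f_0,\ldots,f_n):\C\to\C^{n+1}\setminus\{0\}$ of $f$, write $\|\mathbf{f}(z)\|=\max_i|f_i(z)|$, and let $L_0,\ldots,L_q$ be linear forms on $\C^{n+1}$ defining the hyperplanes $H_0,\ldots,H_q$. The linear nondegeneracy of $f$ guarantees that the Wronskian $W=W(f_0,\ldots,f_n)$ does not vanish identically. Via the first main theorem $T_f(r)=m_f(r,H_j)+N_f(r,H_j)+O(1)$, it suffices to prove the equivalent inequality
$$\sum_{j=0}^{q}m_f(r,H_j) + N_W(r,0) \leq (n+1)T_f(r) + O(\log^+(rT_f(r)))$$
outside a set of finite linear measure.

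The combinatorial core is a pointwise bound. For each $z\in\C$ reorder the indices so that $|L_{j_0}(\mathbf{f}(z))|\leq\cdots\leq|L_{j_q}(\mathbf{f}(z))|$; by general position $\{L_{j_0},\ldots,L_{j_n}\}$ is linearly independent, and expressing each $f_k$ as a linear combination of these forms yields $\|\mathbf{f}(z)\|\leq C\,|L_{j_i}(\mathbf{f}(z))|$ for $i\geq n+1$, with $C$ depending only on $L_0,\ldots,L_q$. Multiplying gives
$$\prod_{j=0}^{q}\frac{\|\mathbf{f}(z)\|}{|L_j(\mathbf{f}(z))|} \leq C^{q-n}\,\max_{\sigma}\prod_{j\in\sigma}\frac{\|\mathbf{f}(z)\|}{|L_j(\mathbf{f}(z))|},$$
where $\sigma$ ranges over $(n+1)$-subsets of $\{0,\ldots,q\}$ corresponding to linearly independent forms. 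For any such $\sigma$ a change of basis gives $W(L_j(\mathbf{f}):j\in\sigma)=c_\sigma\,W(\mathbf{f})$ with $c_\sigma\neq 0$, so inserting $W$ we rewrite
$$\prod_{j\in\sigma}\frac{\|\mathbf{f}\|}{|L_j(\mathbf{f})|}=\frac{\|\mathbf{f}\|^{n+1}}{|W|}\cdot\frac{|W(L_j(\mathbf{f}):j\in\sigma)|}{|c_\sigma|\prod_{j\in\sigma}|L_j(\mathbf{f})|},$$
and expanding the Wronskian displays every entry as a product of logarithmic derivatives $(L_j(\mathbf{f}))^{(k)}/L_j(\mathbf{f})$.

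Taking $\log^+$, integrating over $|z|=r$, and applying the classical Nevanlinna lemma on the logarithmic derivative term-by-term, every factor of the form $(L_j(\mathbf{f}))^{(k)}/L_j(\mathbf{f})$ contributes only $O(\log^+(rT_f(r)))$ outside a set of finite linear measure, while Jensen's formula applied to $W$ turns $\|\mathbf{f}\|^{n+1}/|W|$ into $(n+1)T_f(r)-N_W(r,0)+O(1)$. The left-hand side of the pointwise bound integrates (after adding $(q+1)\log\|\mathbf{f}\|$ on both sides) to $\sum_j m_f(r,H_j)$, and rearrangement produces the displayed inequality, which via the first main theorem becomes the form $(q-n)T_f(r)\leq\sum_j N_f(r,H_j)-N_W(r,0)+O(\log^+(rT_f(r)))$ stated in the theorem.

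The main obstacle is the passage from the pointwise bound, whose right-hand side depends on the index set $\sigma(z)$ that jumps discontinuously with $z$, to an integral estimate on $|z|=r$; this is handled by dominating the pointwise maximum by a sum over the finitely many admissible $\sigma$'s, incurring only an additive $O(1)$ loss. A secondary technical nuisance is consolidating the finitely many exceptional sets produced by several applications of the logarithmic derivative lemma into a single exceptional set of finite linear measure, which is routine.
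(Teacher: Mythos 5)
Your outline is correct: it is the standard Wronskian argument going back to Cartan's 1933 paper — the pointwise product estimate over the $n+1$ smallest $|L_j(\mathbf{f})|$ using general position, insertion of $W(f_0,\ldots,f_n)$ via $W(L_j(\mathbf{f}):j\in\sigma)=c_\sigma W$, the lemma on the logarithmic derivative for the determinant entries, and Jensen's formula to convert $\log\bigl(\|\mathbf{f}\|^{n+1}/|W|\bigr)$ into $(n+1)T_f(r)-N_W(r,0)$, followed by the first main theorem. Note that the paper itself gives no proof of this statement — it is quoted as a classical background result with a citation to Cartan — so there is nothing to compare beyond observing that your reconstruction is essentially the original (and textbook) proof, with the exceptional-set and $\max_\sigma$-versus-sum bookkeeping handled correctly.
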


Here $N_W(r,0)$ is a ramification term defined in terms of the Wronskian determinant of the entire component functions of $f$ and $N_f(r,H_j)$ is the counting function of $f$ for the hyperplane $H_j$, while $T_f(r)$ is the {\it classical} Cartan characteristic function of $f$ to be defined below. Cartan conjectured that if the image of a holomorphic curve in $\P^n(\C)$ spans a linear subspace of codimension $s$, then the inequality
    $$
    (q-n-s)T_f(r) \leq \sum_{j=0}^q N_f(r,H_j) - \frac{n+1}{n+1-s}N_W(r,0) + O(\log^+(rT_f(r)))
    $$
holds nearly everywhere. Cartan's conjecture was finally proved, almost half a century after Cartan's original formulation, by Nochka \cite{nochka:83} in 1983.

%\begin{theorem}[Nochka]\label{nochkacartan}
%Let $f:\C\to\P^n(\C)$ be a meromorphic map. Let $H_j$, $j=0,\ldots,q$, be $q+1$ hyperplanes of $\P^n(\C)$ in general position. Then
%    $$
%    (q-n)T_f(r) \leq \sum_{j=0}^q N_f(r,H_j) - N_W(r,0) + O(\log^+(rT_f(r)))
%    $$
%nearly everywhere.
%
%\end{theorem}

%The codimension $s$ of $f(\C)$ in $\P^n(\C)$ is analogous to the degree of degeneracy of a set of tropical linear combinations,

The following two theorems are tropical analogues of Nochka's extension of Cartan's second main theorem. The first one is formulated with an explicit form of the error term (see \cite{cherryy:01} for an account on sharp forms of the error terms in the classical value distribution theory) and it holds for all tropical holomorphic curves without a growth condition.

\begin{theorem}\label{tcartan}
Let $q$ and $n$ be positive integers with $q>n$, and let $\varepsilon>0$.
Given $n+1$ tropical entire functions $g_0, \ldots , g_n$ without common roots, and linearly independent in Gondran-Minoux sense, let the $q+1$ tropical linear combinations $f_{0}, \ldots , f_q$ of the $g_j$ over the semi-ring $\mathbb{R}_{\max}$ be defined by
$$
f_k(x)=a_{0k}\odot g_0(x)\oplus a_{1k}\odot g_1(x)\odot \cdots \odot a_{nk}\odot g_n(x), \quad 0 \leq k \leq q.
$$
Let $
    \lambda=\mbox{ddg}(\{f_{n+1}, \ldots , f_q\})
    $
and
    \begin{equation}\label{tildeL}
	\tilde L=\frac{f_0\odot \overline{f}_1 \odot \cdots \odot \overline{f}_n^{[n]}\odot f_{n+1}\odot\cdots\odot f_q}{\ct(f_0,f_1,\ldots,f_n)}\oslash.
	\end{equation}
If the tropical holomorphic curve $g$ of $\mathbb{R}$ into $\mathbb{TP}^{n}$ has a reduced representation $\mathbf{g}=(g_0, \ldots , g_n)$,
then
\begin{equation}\label{tcartanineq}
\begin{split}
(q-n-\lambda)T_{g}(r) &\leq N\bigl(r, 1_{\circ}\oslash \tilde L\bigr)
-N(r, \tilde L) \\ &\quad  + \sum_{j=1}^q\sum_{l=0}^n \sum_{m=0}^n m\left(r,(\overline{f}_{j}^{[l]}\oslash\overline{f}_0^{[l]}) \oslash (\overline{f}_j^{[m]} \oslash \overline{f}_0^{[m]})\right)\\
 &\quad + L(0) + \sum_{j=1}^n \left( \max_{0 \leq i \leq n} \{a_{ij}+g_i(j)\}- \max_{0 \leq i \leq n} \{a_{ij}+g_i(0)\} \right) \\
    &\quad - \sum_{\nu=n+1 \atop \card(I_{\nu})=n+1}^q \min_{j\in I_{\nu}} \{ a_{j\nu}\}  -\sum_{\nu=n+1 \atop \card(I_{\nu})< n+1}^q \max_{j \in I_{\nu}} \{ a_{j\nu} + g_j(0) \} \\ &\quad - (q-n-\lambda)\max_{j\in\{0,\ldots,n\}}\{g_j(0)\},
\end{split}
\end{equation}
where the set $I_\nu$ consists of all indices $j\in \{0,\ldots,n\}$ such that $a_{j\nu}\not=-\infty$, and
    \begin{equation}\label{L}
    L=\frac{f_0\odot f_1 \odot \cdots \odot f_{n}\odot f_{n+1} \odot \cdots \odot f_{q}}{C_{\circ}(f_0, f_1, \cdots , f_n)}\oslash.
    \end{equation}
\end{theorem}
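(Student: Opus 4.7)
The strategy is purely algebraic-combinatorial combined with the tropical Jensen formula \eqref{tPJ}; since no growth hypothesis is imposed in this theorem, Corollary \ref{logdercor} is not invoked within the proof itself, and the shift-ratio proximity terms on the right of \eqref{tcartanineq} survive as-is (to be absorbed later in growth-constrained consequences).

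First I would factor the tropical Casoratian using Lemma \ref{casoratilemma}(iv). Writing each $f_k = f_0 \odot (f_k \oslash f_0)$ yields
\[
C_\circ(f_0, f_1, \ldots, f_n) = f_0 \odot \overline{f}_0 \odot \cdots \odot \overline{f}_0^{[n]} \odot C_\circ(1_\circ, f_1 \oslash f_0, \ldots, f_n \oslash f_0),
\]
and expanding the right-hand Casoratian as a tropical sum over permutations $\pi$ of $\{0,\ldots,n\}$ produces a maximum of products of ratios $\overline{f}_j^{[l]} \oslash \overline{f}_0^{[l]}$. Substituting this factorization into the definition of $\tilde L$ and rearranging, $\tilde L$ becomes a minimum over permutations of $f_{n+1} \odot \cdots \odot f_q$ multiplied by chains of the form $(\overline{f}_j^{[l]}\oslash\overline{f}_0^{[l]})\oslash(\overline{f}_j^{[m]}\oslash\overline{f}_0^{[m]})$; these are precisely the ratios entering the triple sum on the right of \eqref{tcartanineq}.

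Next I would estimate the product $\bigodot_{\nu=n+1}^q f_\nu$ pointwise using the completeness/degeneracy dichotomy. An upper bound $f_k(x) \leq F(x) + \max_i\{a_{ik}\}$, where $F = \max\{g_0,\ldots,g_n\}$, is immediate from the definition of a tropical linear combination. For a \emph{complete} $f_\nu$ ($\card(I_\nu) = n+1$), a spanning-basis argument gives a lower bound of the form $f_\nu(x) \geq F(x) + \min_{j\in I_\nu}\{a_{j\nu}\} - \max_j\{g_j(0)\}$ at the evaluation points $\pm r$; for a \emph{degenerate} $f_\nu$ ($\card(I_\nu) < n+1$), only the constant bound $f_\nu(0) = \max_{j\in I_\nu}\{a_{j\nu} + g_j(0)\}$ is available. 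Summing over $\nu \in \{n+1,\ldots,q\}$ produces exactly $q - n - \lambda$ copies of $F(x)$ together with the correction terms carrying minus signs in the last two lines of \eqref{tcartanineq}.

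Finally, I would apply the tropical Jensen formula \eqref{tPJ} to $\tilde L$ to convert $N(r, 1_\circ\oslash\tilde L) - N(r, \tilde L)$ into proximity data, invoke Definition \ref{dfrdef} to identify $\frac{1}{2}(F(r) + F(-r)) - F(0) = T_g(r)$, and combine with the Casoratian factorization to collect the $(q - n - \lambda) T_g(r)$ term on the left side. The boundary corrections $\max_{0\leq i\leq n}\{a_{ij}+g_i(j)\} - \max_{0\leq i\leq n}\{a_{ij}+g_i(0)\}$ enter through the comparison of $\overline{f}_j^{[j]}(0) = f_j(j)$ with $f_j(0)$ inside the $\tilde L(0)$ normalization, while the term $L(0)$ on the right absorbs the leftover constant from the algebraic manipulation of $L$ versus $\tilde L$. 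The main obstacle is the bookkeeping of constants: correctly tracking the half-symmetric sampling at $\pm r$ against the shifted evaluations at $0, 1, \ldots, n$ demanded by the Casoratian, and verifying the combinatorial identity that every ratio $(\overline{f}_j^{[l]}\oslash\overline{f}_0^{[l]})\oslash(\overline{f}_j^{[m]}\oslash\overline{f}_0^{[m]})$ produced by swapping two permutation columns in the factored Casoratian is accounted for exactly once, without overcounting, in the triple sum of \eqref{tcartanineq}.
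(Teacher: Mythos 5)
Your proposal follows essentially the same route as the paper's proof: factor $C_\circ(f_0,\ldots,f_n)$ via Lemma \ref{casoratilemma}(iv) so that $f_{n+1}\odot\cdots\odot f_q=\tilde L\odot K$ with $K$ built from the shift ratios, bound $\tfrac12\bigl(f_\nu(r)+f_\nu(-r)\bigr)$ from below using the complete/degenerate dichotomy (convexity giving the constant $f_\nu(0)$ for degenerate terms, and $F(x)+\min_{j\in I_\nu}a_{j\nu}$ with $F=\max\{g_0,\ldots,g_n\}$ for complete ones), apply the Jensen formula \eqref{tPJ} to $\tilde L$, and trade $\tilde L(0)$ for $L(0)$ plus the shift corrections. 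The only caveats are cosmetic: the triple sum requires only the subadditivity bounds \eqref{proximityineq1}--\eqref{proximityineq2} applied to $m(r,K)$ (no exact once-only accounting of the ratios is needed, and $-m(r,1_\circ\oslash K)\le 0$ is simply discarded), and the term $-(q-n-\lambda)\max_j\{g_j(0)\}$ enters through the identity $\tfrac12\bigl(F(r)+F(-r)\bigr)=T_g(r)+F(0)$ rather than through the pointwise lower bound itself.
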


The reason why we have introduced the function $L$ in \eqref{L} and in the error term of \eqref{tcartanineq} is that it, rather than $\tilde L$, leads to a more natural tropical analogue of Cartan's second main theorem, as can be seen by a comparison with Theorem~\ref{classicalcartan}. In Theorem~\ref{tcartan2} below we will be able to replace $\tilde L$ with $L$ completely in \eqref{tcartanineq}, by imposing a growth condition on $g$.

\begin{proof}[Proof of Theorem \ref{tcartan}.] Lemma \ref{casoratilemma} yields
	\begin{equation*}
	\ct(f_0,f_1,\ldots,f_n)= f_0 \odot \overline{f}_0 \odot \cdots \odot \overline{f}_0^{[n]} \odot \ct(1_\circ,f_1 \oslash f_0,\ldots,f_n \oslash f_0),
	\end{equation*}
and so, by defining
    \begin{equation*}
	\tilde L=\frac{f_0\odot \overline{f}_1 \odot \cdots \odot \overline{f}_n^{[n]}\odot f_{n+1}\odot\cdots\odot f_q}{\ct(f_0,f_1,\ldots,f_n)}\oslash,
	\end{equation*}
and
	\begin{equation*}
	v=f_{n+1} \odot \cdots \odot f_q,
	\end{equation*}
it follows that
	\begin{equation}\label{vLK}
	v=\tilde L\odot K,
	\end{equation}
where
	\begin{equation}\label{K}
	K=\ct(1_\circ,f_1 \oslash f_0,\ldots,f_n \oslash f_0)  \odot  (\overline{f}_0\oslash\overline{f}_1) \odot \cdots \odot (\overline{f}_0^{[n]}\oslash\overline{f}_n^{[n]}).
	\end{equation}

We will now show that
	\begin{equation}\label{v1}
    \begin{split}
	(q-n-\lambda)T_g(r) &\leq \frac12 v(r)+\frac12 v(-r) -\sum_{\nu=n+1 \atop \card(I_{\nu})=n+1}^q \min_{j\in I_{\nu}} \{a_{j\nu}\} \\ & \quad -\sum_{\nu=n+1 \atop \card(I_{\nu})< n+1}^q \max_{j \in I_{\nu}} \{ a_{j\nu} + g_j(0) \} - (q-n-\lambda)\max_{j\in\{0,\ldots,n\}}\{g_j(0)\}
    \end{split}
	\end{equation}
for all $r$ sufficiently large. This estimate is verified in the following way. We represent the $f_{\nu}$ $(n+1 \leq \nu \leq q)$ by
    \begin{equation}\label{f01}
    f_{\nu}(x) = \max_{j \in I_{\nu}} \{ a_{j\nu} + g_j(x) \}, \quad a_{j\nu} \in \mathbb{R},
    \end{equation}
for index sets $I_{\nu} \subset \{0, 1,  \ldots ,  n\}$ of cardinality $\card(I_{\nu})$. Note that $f_\nu(x)$ is itself a tropical entire function since it is a tropical linear combination of tropical entire functions. Since $f_\nu(x)$ is piecewise linear function, there exists $\alpha_\nu,\beta_\nu\in\R$ and an interval $[r_1,r_2]\subset \R$ containing the origin such that $r_1<r_2$ and
    \begin{equation}\label{f02}
    f_\nu(x) = \alpha_\nu x + \beta_\nu
    \end{equation}
for all $x\in [r_1,r_2]$. Since $0\in[r_1,r_2]$, we have by combining \eqref{f01} and \eqref{f02} that
    \begin{equation}\label{betanu}
    f_\nu(0) =\beta_\nu = \max_{j \in I_{\nu}} \{ a_{j\nu} + g_j(0) \}.
    \end{equation}
Define
    $$
    u_\nu (x) := \alpha_\nu x + \beta_\nu
    $$
for all $x\in\R$. Then by the  by the convexity of the graph of $f_\nu$, it follows that
    $$
    f_\nu(x) \geq u_\nu(x)
    $$
for all $x\in\R$. Therefore,
    \begin{equation}\label{ineq}
    \frac12 f_\nu(r) + \frac12 f_\nu(-r) \geq  \frac12 u_\nu(r) + \frac12 u_\nu(-r) =\beta_\nu.
    \end{equation}
Now, by \eqref{f01} and \eqref{ineq}, we have
    \begin{equation}\label{eqn}
    \begin{split}
    \frac12 v(r)+\frac12 v(-r) &= \frac12\sum_{\nu=n+1}^q \left(f_\nu(r)+f_\nu(-r)\right)\\
    &= \frac12 \sum_{\nu=n+1 \atop \card(I_{\nu})=n+1}^q \left(f_\nu(r)+f_\nu(-r)\right) + \frac12 \sum_{\nu=n+1 \atop \card(I_{\nu})< n+1}^q \left(f_\nu(r)+f_\nu(-r)\right)\\
    &\geq  \frac12 \sum_{\nu=n+1 \atop \card(I_{\nu})=n+1}^q \left(f_\nu(r)+f_\nu(-r)\right) + \sum_{\nu=n+1 \atop \card(I_{\nu})< n+1}^q \beta_\nu\\
    &\geq (q-n-\lambda)\left(T_{g}(r)+ \max_{j\in\{0,\ldots,n\}}\{g_j(0)\}\right)+ \sum_{\nu=n+1 \atop \card(I_{\nu})=n+1}^q \min_{j\in I_{\nu}} \{a_{j\nu}\}\\  &\quad +\sum_{\nu=n+1 \atop \card(I_{\nu})< n+1}^q \max_{j \in I_{\nu}} \{ a_{j\nu} + g_j(0) \},
    \end{split}
    \end{equation}
and so \eqref{v1} holds as we claimed.

By \eqref{vLK} and the tropical Jensen formula \eqref{tPJ}, we have
	\begin{equation}\label{v2}
	\begin{split}
	\frac12 v(r)+\frac12 v(-r) &= \frac12 \tilde L(r)+\frac12 \tilde L(-r) + \frac12 K(r)+\frac12 K(-r)\\
	&= \frac12 \tilde L^+(r)-\frac12(-\tilde L)^+(r)+\frac12 \tilde L^+(-r)-\frac12(-\tilde L)^+(-r)\\
	&\quad+\frac12 K^+(r)-\frac12(-K)^+(r)+\frac12 K^+(-r)-\frac12(-K)^+(-r)\\
	&= m(r,\tilde L)-m(r,1_\circ\oslash\tilde L)+m(r,K)-m(r,1_\circ\oslash K)\\
	&= N(r,1_\circ\oslash\tilde L)-N(r,\tilde L)+m(r,K)-m(r,1_\circ\oslash K)+\tilde L(0).	
	\end{split}
	\end{equation}
Since
    $$
    \tilde L = L \odot \frac{\overline{f}_1 \odot \cdots \odot \overline{f}_n^{[n]}}{f_{1}\odot\cdots\odot f_n}\oslash
    $$
it follows by \eqref{v2} that
    \begin{equation}\label{vvv2}
    \begin{split}
    \frac12 v(r)+\frac12 v(-r) &\leq N(r,1_\circ\oslash\tilde L)-N(r,\tilde L)+m(r,K)-m(r,1_\circ\oslash K) \\
    &\quad + L(0) + \sum_{j=1}^n \left( \max_{0 \leq i \leq n} \{a_{ij}+g_i(j)\}- \max_{0 \leq i \leq n} \{a_{ij}+g_i(0)\} \right).
    \end{split}
    \end{equation}
The function $K$ in \eqref{K} can be written in the form
	\begin{equation*}\label{K2}
	K=\ct(1_\circ,f_1 \oslash f_0,\ldots,f_n \oslash f_0)  \oslash \left( (\overline{f}_1 \oslash \overline{f}_0) \odot \cdots \odot (\overline{f}_n^{[n]}\oslash\overline{f}_0^{[n]})\right),
	\end{equation*}
which implies that $K$ consists purely of tropical sums and
products of the form
$(\overline{f}_{j}^{[l]}\oslash\overline{f}_0^{[l]}) \oslash (\overline{f}_j^{[m]} \oslash \overline{f}_0^{[m]})$ where
$l,m\in\{0,1,\ldots,n\}$ and $j\in\{1,\ldots,q\}$. Therefore,
    \begin{equation}\label{logdappl}
    m(r,K) \leq   \sum_{j=1}^q\sum_{l=0}^n \sum_{m=0}^n m\left(r,(\overline{f}_{j}^{[l]}\oslash\overline{f}_0^{[l]}) \oslash (\overline{f}_j^{[m]} \oslash \overline{f}_0^{[m]})\right).
    \end{equation}
By combining \eqref{eqn}, \eqref{vvv2} and \eqref{logdappl}, it follows that
    \begin{equation}\label{tcartanineqtilde}
    \begin{split}
    (q-n-\lambda)T_g(r) &\leq
    N(r,1_\circ\oslash \tilde L)-N(r,\tilde L) \\ &\quad + \sum_{j=1}^q\sum_{l=0}^n \sum_{m=0}^n m\left(r,(\overline{f}_{j}^{[l]}\oslash\overline{f}_0^{[l]}) \oslash (\overline{f}_j^{[m]} \oslash \overline{f}_0^{[m]})\right)\\
    &\quad + L(0) + \sum_{j=1}^n \left( \max_{0 \leq i \leq n} \{a_{ij}+g_i(j)\}- \max_{0 \leq i \leq n} \{a_{ij}+g_i(0)\} \right) \\
    &\quad - \sum_{\nu=n+1 \atop \card(I_{\nu})=n+1}^q \min_{j\in I_{\nu}} \{ a_{j\nu}\}  -\sum_{\nu=n+1 \atop \card(I_{\nu})< n+1}^q \max_{j \in I_{\nu}} \{ a_{j\nu} + g_j(0) \} \\ & \quad -(q-n-\lambda)\max_{j\in\{0,\ldots,n\}}\{g_j(0)\}.
    \end{split}
    \end{equation}
\end{proof}

By imposing a growth condition, which demands that the hyper-order of the tropical holomorphic curve under consideration is strictly less than one, we can show that the error term in Theorem~\ref{tcartan} is small with respect to $T_g(r)$.

\begin{theorem}\label{tcartan2}
Let $q$ and $n$ be positive integers with $q>n$, and let $\varepsilon>0$.
Given $n+1$ tropical entire functions $g_0, \ldots , g_n$ without common roots, and linearly independent in Gondran-Minoux sense, let the $q+1$ tropical linear combinations $f_{0}, \ldots , f_q$ of the $g_j$ over the semi-ring $\mathbb{R}_{\max}$ be defined by
$$
f_k(x)=a_{0k}\odot g_0(x)\oplus a_{1k}\odot g_1(x)\odot \cdots \odot a_{nk}\odot g_n(x), \quad 0 \leq k \leq q.
$$
Let $
    \lambda=\mbox{ddg}(\{f_{n+1}, \ldots , f_q\})
    $
and
\begin{equation}\label{L2}
L=\frac{f_0\odot f_1 \odot \cdots \odot f_{n}\odot f_{n+1} \odot \cdots \odot f_{q}}{C_{\circ}(f_0, f_1, \cdots , f_n)}\oslash.
\end{equation}
If the tropical holomorphic curve $g$ of $\mathbb{R}$ into $\mathbb{TP}^{n}$ with reduced representation $\mathbf{g}=(g_0, \ldots , g_n)$
is of hyper-order
\begin{equation}\label{growth}
\varsigma:=\varsigma(\mathbf{g})<1,
\end{equation}
then
\begin{equation}\label{tcartanineq2}
\begin{split}
(q-n-\lambda)T_{g}(r) &\leq N\bigl(r, 1_{\circ}\oslash L\bigr)-N(r, L)+o\left(\frac{T_{g}(r)}{r^{1-\varsigma-\varepsilon}}\right),
\end{split}
\end{equation}
where $r$ approaches infinity outside an exceptional set of finite logarithmic measure.
\end{theorem}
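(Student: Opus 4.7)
The plan is to deduce Theorem~\ref{tcartan2} from Theorem~\ref{tcartan} in two reductions: (i) replace $\tilde L$ by $L$ in the counting-function term of \eqref{tcartanineq}, and (ii) show that the large proximity sum, together with all remaining constant terms, is absorbed into $o(T_{g}(r)/r^{1-\varsigma-\varepsilon})$ via the tropical logarithmic derivative analogue. The constants $L(0)$, $\sum_{j=1}^{n}(\max-\max)$, $-\sum\min$, $-\sum\max$, and $-(q-n-\lambda)\max_j\{g_j(0)\}$ in \eqref{tcartanineq} are independent of $r$ and are therefore trivially $o(T_{g}(r)/r^{1-\varsigma-\varepsilon})$ once $T_g(r)$ is unbounded, and harmlessly $O(1)$ otherwise.

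For the proximity sum in \eqref{tcartanineq}, each summand has the form $m\bigl(r,\overline{h}_j^{[l]}\oslash \overline{h}_j^{[m]}\bigr)$ with $h_j:=f_j\oslash f_0$. By Lemma~\ref{Tlemma} the tropical meromorphic function $h_j$ satisfies $T(r,h_j)\le T_g(r)+O(1)$, so its hyper-order is at most $\varsigma<1$. Corollary~\ref{logdercor} then gives $m\bigl(r,\overline{h}_j^{[l]}\oslash \overline{h}_j^{[m]}\bigr)=o\bigl(T_g(r)/r^{1-\varsigma-\varepsilon}\bigr)$ as $r\to\infty$ outside a set of finite logarithmic measure. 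Since the number of summands is bounded by the fixed quantity $q(n+1)^{2}$, the finite union of exceptional sets is still of finite logarithmic measure and the entire sum is $o\bigl(T_g(r)/r^{1-\varsigma-\varepsilon}\bigr)$.

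For the conversion of $\tilde L$ to $L$, formulas \eqref{tildeL} and \eqref{L2} give $\tilde L=L\odot \bigodot_{j=1}^{n}(\overline{f}_j^{[j]}\oslash f_j)$. The additivity $\omega_{u\odot v}=\omega_u+\omega_v$ yields $[n(t,1_\circ\oslash(u\odot v))-n(t,u\odot v)]=[n(t,1_\circ\oslash u)-n(t,u)]+[n(t,1_\circ\oslash v)-n(t,v)]$, so after integration
\[
N(r,1_\circ\oslash\tilde L)-N(r,\tilde L)=N(r,1_\circ\oslash L)-N(r,L)+\sum_{j=1}^{n}\bigl[N(r,1_\circ\oslash\phi_j)-N(r,\phi_j)\bigr],
\]
where $\phi_j:=\overline{f}_j^{[j]}\oslash f_j$. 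The tropical Jensen formula \eqref{tPJ} rewrites each bracketed term as $m(r,\phi_j)-m(r,1_\circ\oslash\phi_j)-\phi_j(0)$. Since $f_j$ is tropical entire with $f_j(x)\le F(x)+\max_i\{a_{ij}\}$, one has $T(r,f_j)=O(T_g(r))$ and hence $\varsigma(f_j)\le\varsigma<1$, so Corollary~\ref{logdercor} bounds both proximity terms by $o(T_g(r)/r^{1-\varsigma-\varepsilon})$ outside a set of finite logarithmic measure, while $\phi_j(0)$ is a constant.

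Substituting the two estimates into \eqref{tcartanineq} and lumping together all the finite-logarithmic-measure exceptional sets produces \eqref{tcartanineq2}. The main technical obstacle is the bookkeeping for the hyper-order estimates of the derived functions $f_j$ and $h_j=f_j\oslash f_0$: one must verify in each case that $\varsigma(f_j),\varsigma(h_j)\le\varsigma(\mathbf{g})$, which in turn hinges on Lemma~\ref{Tlemma} and on the elementary pointwise bound $f_j\le F+C$. Once that bookkeeping is set, every remaining step is a direct application of Corollary~\ref{logdercor} and the tropical Jensen formula.
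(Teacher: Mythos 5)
Your first reduction (absorbing the $r$-independent constants and estimating the triple proximity sum via Corollary~\ref{logdercor} together with Lemma~\ref{Tlemma} applied to $h_j=f_j\oslash f_0$) is exactly the paper's argument. The divergence, and the gap, is in your second reduction. You factor $\tilde L = L\odot\bigodot_{j=1}^n\bigl(\overline{f}_j^{[j]}\oslash f_j\bigr)$ and then invoke Corollary~\ref{logdercor} for each $\phi_j=\overline{f}_j^{[j]}\oslash f_j$, which requires $\varsigma(f_j)<1$ and, to land in the stated error term, $T(r,f_j)=O(T_g(r))$. You justify this solely by the pointwise bound $f_j\le F+C$. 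That bound only gives $m(r,f_j)\le m(r,F)+O(1)$, and $m(r,F)$ is not $T_g(r)+O(1)$ in general: since $m(r,F)=\tfrac12\bigl(F^+(r)+F^+(-r)\bigr)$ while $T_g(r)=\tfrac12\bigl(F(r)+F(-r)\bigr)-F(0)$, the two differ by $m(r,1_\circ\oslash F)$, which is unbounded whenever $F$ is very negative near one end. For instance $g_0=x$, $g_1=x-1$ gives $T_g(r)\equiv 0$ but $m(r,F)\sim r/2$ (this pair is Gondran--Minoux dependent, showing the inference cannot be purely formal), and even the admissible pair $g_0=x$, $g_1=\max\{x,2x\}$ gives $m(r,F)=2T_g(r)$. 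So the claim $T(r,f_j)=O(T_g(r))$ is true under the theorem's hypotheses, but only because Gondran--Minoux independence forces $F$ to be non-linear, whence $T_g(r)$ grows at least linearly, while convexity of $F$ bounds $m(r,1_\circ\oslash F)$ by $O(r)$; none of this is in your write-up, and it is precisely the bookkeeping you defer at the end.

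The paper avoids this issue altogether: instead of proximity functions of $\phi_j$, it compares counting functions. Using that $g_0,\ldots,g_n$ have no common roots, it gets $N(r,1_\circ\oslash f_j)\le\sum_{k} N(r,g_k\oslash f_j)\le (n+1)T_g(r)+O(1)$ from Lemma~\ref{Tlemma}, and then applies the shift Lemma~\ref{technical} to the non-decreasing function $N(r,1_\circ\oslash f_j)$ to obtain $N\bigl(r,1_\circ\oslash\overline{f}_j^{[j]}\bigr)\le N(r,1_\circ\oslash f_j)+o\bigl(T_g(r)/r^{1-\varsigma-\varepsilon}\bigr)$, after which the passage from $\tilde L$ to $L$ is a counting-function identity. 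Either supply the convexity/independence argument sketched above to legitimize $T(r,f_j)=O(T_g(r))$ (and hence $\varsigma(f_j)\le\varsigma$), or replace your Jensen-plus-logarithmic-difference step by this counting-function route; as written, the key growth claim is asserted rather than proved.
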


Before proving this theorem we briefly return to the value distribution theory by H. Cartan \cite{cartan:33} and Nochka \cite{nochka:83}. The characteristic function of the system $g=(g_0, g_1, \ldots , g_n)$, where $g_0, g_1, \ldots , g_n$ are entire functions in $\mathbb{C}$ without common zeros, is defined by
$$
T_f(r)=\frac{1}{2\pi}\int_0^{2\pi} U(re^{i\theta})d\theta - U(0), \qquad
U(z)=\max_{0\leq j\leq n} \log |g_j(z)|.
$$
Let $X$ be a set of linear combinations of $g_0, g_1, \ldots , g_n$ with coefficients in $\mathbb{C}$ that do not vanish identically and are located in general position.
The number $\lambda$ defined by
$$
\lambda:=\dim \left\{(c_0, c_1, \ldots, c_n)\in\mathbb{C}^{n+1} \, \Big|\,
\sum_{j=0}^n c_j g_j\equiv 0 \right\}
$$
plays an important role in Cartan's and Nochka's second main theorems.
We note that $0\leq \lambda \leq n-1$ and
$$
\dim \left\{(c_0, c_1, \ldots, c_n)\in\mathbb{C}^{n+1} \, \Big|\,
\sum_{j=0}^n c_j f_j\equiv 0 \right\}=\lambda
$$
for any $n+1$ elements $f_0, f_1, \ldots , f_n$ in $X$.
The system $g$ is called {\it degenerated} if $\lambda >0$.
Then the Cartan-Nochka second main theorem states that for any $q+1$ combinations $f_0, f_1, \ldots , f_n$ in $X$,
\begin{equation}\label{2ndmttruncated}
(q-n-\lambda)T_f(r)\leq \sum_{j=0}^{q} N_{n-\lambda}(r, 1/f_j)+O\bigl(\log rT_f(r)\bigr),
\end{equation}
as $r\to \infty$ except for a set of finite linear measure. The counting function $N_{n-\lambda}(r,1/f_j)$ counts the zeros of $f_j$ of multiplicity $m$ so that each such zero is taken into account exactly $\min\{m, n-\lambda\}$ times. The truncated counting functions on the right hand side of \eqref{2ndmttruncated} arise as combinations of regular counting functions and the ramification term expressed in terms of the Wronskian determinant. Although Theorem \ref{tcartan2} has an analogue of the ramification term, finding out how to introduce a consistent truncation for tropical linear combinations of tropical entire functions appears to be difficult. Theorem \ref{tcartan2} implies that
    \begin{equation}\label{2mtwithramification}
    (q-n-\lambda)T_{g}(r) \leq \sum_{j=0}^q N(r, 1_{\circ}\oslash f_j)  - N(r, 1_{\circ}\oslash C_{\circ}(f_0, \ldots , f_n))+ o\left( \frac{T_{g}(r)}{r^{1-\varsigma-\varepsilon}} \right),
    \end{equation}
which is of an analogous form to the classical second main theorem, and it has a ramification-type term $N(r, 1_{\circ}\oslash C_{\circ}(f_0, \ldots , f_n))$. However, this counting function is in terms of the tropical linear combinations $f_0,\ldots,f_n$ of $g_0,\ldots,g_n$, rather than the basis functions themselves, as is the case in the classical Cartan second main theorem. This is essentially due to the properties of tropical matrixes, which in general are not invertible even when they are regular. The interpretation of the term $N(r, 1_{\circ}\oslash C_{\circ}(f_0, \ldots , f_n))$ is far from straightforward and,
as the following example demonstrates, the situation is not clear even in the one-dimensional case.

\begin{example} Let $f_0$ and $f_1$ be tropical entire functions such that
\begin{equation}\label{f0f1}
f_0(x) = \left\{
\begin{array}{ll}
1 & (x\leq 0) \\
x+1 & (x\geq 0)
\end{array}
\right. , \quad \text{and} \quad
f_1(x) = \left\{
\begin{array}{ll}
-(x-1) & (x\leq 1) \\
2(x-1) & (x\geq 1)
\end{array}
\right.
\end{equation}
locally in the interval $x\in [-1,2]$. In particular, one can take $f_0$ and $f_1$ as tropical polynomials defined by \eqref{f0f1} in the whole real line. A simple calculation shows that
\begin{eqnarray*}
C_{\circ}(f_0,f_1)&=&\max\{f_0(x+1)+f_1(x), f_0(x)+f_1(x+1)\}\\
&=&
\left\{ \
\begin{array}{ll}
-x+2 & (x\leq -1)\\
3 & (-1\leq x \leq 0)\\
3  & (0\leq x \leq 2/3)\\
3x+1 & (2/3 \leq x \leq 1)\\
3x+1 & (1\leq  x).
\end{array}
\right.
\end{eqnarray*}
Hence $C_{\circ}(f_0,f_1)$ has roots at $x=-1$ of multiplicity~$1$ and at $x=2/3$ of multiplicity~$3$, while at the points $x=0,1$, where $f_0$ and $f_1$ have roots, it is regular.
Therefore some reduction occurs in multiplicity from points near the roots of $f_0$ and $f_1$, but it is uncertain whether this reduction follows any principle such as the truncation rule by a fixed number as in the Cartan-Nochka theorem. In this example one observes that four points $x=-1,0,1$ and $x=2/3$ contribute to
$$
N(r,1_{\circ}\oslash f_0)+N(r,1_{\circ}\oslash f_1)-N(r, 1_{\circ}\oslash C_{\circ}(f_0,f_1)
$$
by $-1(=0-1)$, $+1=(1-0)$, $3=(3-0)$ and $-3=(0-3)$  respectively.
\end{example}

Similar to the previous example, several simple examples for $n=1$
tell us that the sum of multiplicities of the roots of $f_0$ and $f_1$ equals to the sum of multiplicities of the roots of their tropical Casoratian $C_{\circ}(f_0,f_1)$ exactly.
But at this moment we do not have any proof for a general statement for this phenomenon even when $n=1$ yet.

%If we suppose that $f_0, f_1, \ldots , f_n$ are complete (necessarily, $g_0, g_1, \dots, g_n$ are tropically linearly independent, thus the curve $g$ is tropically linearly non-degenerate), then

%Moreover, \eqref{L} can be written as
%	\begin{equation}\label{L1}
%	L= f_{n+1} + \cdots + f_q -  (-f_0) - (-\overline{f}_1) - \cdots - (-\overline{f}_n^{[n]}) - \ct(f_0,f_1,\ldots,f_n).
%	\end{equation}

\begin{proof}[Proof of Theorem \ref{tcartan2}.] Our starting point is the estimate obtained in Theorem~\ref{tcartan}. First of all, we will simplify the error term in \eqref{tcartanineq}. By  Corollary~\ref{logdercor}, it follows that
    \begin{equation*}
    \sum_{j=1}^q\sum_{l=0}^n \sum_{m=0}^n m\left(r,(\overline{f}_{j}^{[l]}\oslash\overline{f}_0^{[l]}) \oslash (\overline{f}_j^{[m]} \oslash \overline{f}_0^{[m]})\right) = o\left(\frac{T(r,f_j \oslash f_0)}{r^{1-\varsigma-\varepsilon}}\right)
    \end{equation*}
for all $r$ outside of an exceptional set of finite logarithmic measure. Hence, by Lemma~\ref{Tlemma} we have
    \begin{equation}\label{Kestim}
    \sum_{j=1}^q\sum_{l=0}^n \sum_{m=0}^n m\left(r,(\overline{f}_{j}^{[l]}\oslash\overline{f}_0^{[l]}) \oslash (\overline{f}_j^{[m]} \oslash \overline{f}_0^{[m]})\right) =o\left(\frac{T_g(r)}{r^{1-\varsigma-\varepsilon}}\right)
    \end{equation}
again, for all $r$ outside of a set of finite logarithmic measure. In addition, by incorporating the constant terms in the right hand side of \eqref{tcartanineq} into the error term $o\left(\frac{T_g(r)}{r^{1-\varsigma-\varepsilon}}\right)$, we have
\begin{equation}\label{tcartanineq2tilde}
\begin{split}
(q-n-\lambda)T_{g}(r) &\leq N\bigl(r, 1_{\circ}\oslash \tilde L\bigr)-N(r, \tilde L)+o\left(\frac{T_{g}(r)}{r^{1-\varsigma-\varepsilon}}\right)
\end{split}
\end{equation}
outside of an exceptional set of finite logarithmic measure. We will now go on to show that \eqref{tcartanineq2tilde} implies \eqref{tcartanineq2}. Since $g_0,\ldots,g_n$ have no roots that are common to all of them, there will be no cancelation for at least one $g_k\oslash f_j$, $k=0,\ldots,n$, at each root of $f_j$. Thus we have
    \begin{equation}\label{Njgrowth0}
    \begin{split}
    N\left(r,1_\circ \oslash f_j\right) &\leq \sum_{k=0}^n N\left(r,g_k \oslash f_j\right)\\
     & \leq\sum_{k=0}^n T\left(r,g_k \oslash f_j\right)\\
     &\leq (n+1)T_{g}(r)+O(1)
    \end{split}
    \end{equation}
by applying Lemma \ref{Tlemma}. Hence
    \begin{equation}\label{Njgrowth}
    N\left(r,1_\circ \oslash f_j\right) \leq (n+2)T_{g}(r)
    \end{equation}
for all $r$ large enough, and so the assumption $\varsigma(g)<1$ yields
    $$
    \eta_j := \limsup_{r\to\infty} \frac{\log\log N\left(r,1_\circ \oslash f_j\right)}{\log r}<1
    $$
for all $j=1,\ldots,n$. Therefore, by Lemma \ref{technical}, we have
    \begin{equation}\label{Njgrowth2}
    %\begin{split}
    N\left(r,1_\circ \oslash \overline{f}_j^{[j]}\right) \leq N\left(r+j,1_\circ \oslash f_j\right)  = N\left(r,1_\circ \oslash f_j\right) + o\left(\frac{N\left(r,1_\circ \oslash f_j\right)}{r^{1-\eta_j-\varepsilon}}\right),
    %\end{split}
    \end{equation}
where $j=1,\ldots,n$ and $r$ tends to infinity outside of an exceptional set of finite logarithmic measure. By combining \eqref{Njgrowth} and \eqref{Njgrowth2}, it follows that
    \begin{equation}\label{Nj}
    N\left(r,1_\circ \oslash \overline{f}_j^{[j]}\right) \leq N\left(r,1_\circ \oslash f_j\right)  + o\left(\frac{T_{g}(r)}{r^{1-\varsigma-\varepsilon}}\right), \qquad j=1,\ldots,n,
    \end{equation}
outside of an exceptional set of finite logarithmic measure. Therefore,
    \begin{equation*}
    \begin{split}
    &N\left(r,1_\circ \oslash \tilde L\right)-N(r,\tilde L) \\ &\quad= N\left(r,\frac{1_\circ}{f_0\odot\overline{f}_1\odot\cdots\odot \overline{f}^{[n]}_n\odot f_{n+1}\odot\cdots
    \odot f_q}\oslash\right) - N\left(r,\frac{1_\circ}{C_\circ(f_0,\ldots,f_n)}\oslash\right) \\
    &\quad = \sum_{j=0}^n N\left(r,1_\circ\oslash \overline{f}^{[j]}_j\right) + N\left(r,\frac{1_\circ}{f_{n+1}\odot\cdots
    \odot f_q}\oslash\right) - N\left(r,\frac{1_\circ}{C_\circ(f_0,\ldots,f_n)}\oslash\right)\\
    &\quad \leq  \sum_{j=0}^n N\left(r,1_\circ \oslash f_j\right)  + N\left(r,\frac{1_\circ}{f_{n+1}\odot\cdots\odot
    f_q}\oslash\right) \\ &\quad\quad - N\left(r,\frac{1_\circ}{C_\circ(f_0,\ldots,f_n)}\oslash\right) + o\left(\frac{T_{g}(r)}{r^{1-\varsigma-\varepsilon}}\right)
    \\
    &\quad = N\left(r,\frac{1_\circ}{f_0\odot\cdots\odot f_q}\oslash\right) - N\left(r,\frac{1_\circ}{C_\circ(f_0,\ldots,f_n)}\oslash\right) + o\left(\frac{T_{g}(r)}{r^{1-\varsigma-\varepsilon}}\right)\\
    &\quad = N\left(r,1_\circ\oslash L\right)-N(r,L) + o\left(\frac{T_{g}(r)}{r^{1-\varsigma-\varepsilon}}\right)
    \end{split}
    \end{equation*}
for all $r$ outside of an exceptional set of finite logarithmic measure. The assertion follows by combining the above inequality with \eqref{tcartanineq2tilde}. 
\end{proof}

%\begin{corollary}
%Let $0\leq \ell<q-n$,  let $\varepsilon>0$ and let $\{f_k\}$  be a set of $q+1$ tropical linear combinations of the tropical entire functions $g_0, g_1, \ldots , g_n$ without common roots.
%Assume that there exist a subset $\mathcal{F}$ of $\{f_k\}$ with $q-n$ elements such that the degree of degeneracy $\textrm{ddg}(\mathcal{F})=\ell$.
%If the tropical holomorphic curve $\mathbf{g}$ of $\mathbb{R}$ into $\mathbb{TP}^n$ with the reduced representation $(g_0, g_1, \ldots , g_n)$ has the hyper-order $\varsigma(\mathbf{g}):=\varsigma<1$, then
%$$
%(q-n-\ell)T_{\mathbf{g}}(r) \leq \sum_{j=0}^q N(r, 1_{\circ}\oslash f_j)  - N(r, 1_{\circ}\oslash C_{\circ}(f_0, \ldots , f_n))+ o\left( \frac{T_{\mathbf{g}}(r)}{r^{1-\varsigma-\varepsilon}} \right),
%$$
%where $r$ approaches to $\infty$ outside an exceptional set of finite logarithmic measure.
%\end{corollary}

\section{Tropical second main theorem in the one dimensional case}

Laine and Tohge proved the following version of the second main theorem for tropical meromorphic functions.

\begin{theorem}[\cite{lainet:11}]\label{2mtLT}
If $f$ is a non-constant tropical meromorphic function of hyper-order $\varsigma<1$, if $\varepsilon>0$, and if $q\geq 1$ distinct values $a_1,\ldots,a_q\in\R$ satisfy
    \begin{equation}\label{2mtLTassumption}
    \max\{a_1,\ldots,a_q\}< \inf\{f(\alpha):\omega_f(\alpha)<0\},
    \end{equation}
and
    \begin{equation}\label{2mtLTassumption2}
    \inf\{f(\alpha):\omega_f(\alpha)>0\}>-\infty,
    \end{equation}
then
    \begin{equation}\label{2mtLTassertion}
    qT(r,f)\leq \sum_{k=1}^q N(r,1_\circ\oslash(f\oplus a_k))+o\left(\frac{T(r,f)}{r^{1-\varsigma-\varepsilon}}\right)
    \end{equation}
for all $r$ outside of an exceptional set of finite logarithmic measure.
\end{theorem}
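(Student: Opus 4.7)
The plan is to derive Theorem~\ref{2mtLT} as a one-dimensional specialization of Theorem~\ref{tcartan2}. First, by Proposition~\ref{fgh}, I write $f = h \oslash g$ with $g$ and $h$ tropical entire having no common roots. Since $f$ is non-constant, $\{g, h\}$ is linearly independent in the Gondran--Minoux sense, so $\mathbf{g} = (g, h)$ is a reduced representation of the tropical holomorphic curve $[g:h]\colon \R \to \T\P^{1}$. By Proposition~\ref{consistency}, $T_{g}(r) = T(r, f) + O(1)$, whence the hyper-order of $\mathbf{g}$ equals $\varsigma(f) < 1$.

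Next I form the $q+2$ tropical linear combinations $F_{0} := g$, $F_{1} := h$, and $F_{k+1} := h \oplus a_{k} \odot g$ for $k = 1, \ldots, q$. Each $F_{k+1}$ uses both basis functions with real coefficients $a_{k}$ and $1_{\circ}$, so $\ell(F_{k+1}) = 2$ and hence $\lambda := \mbox{ddg}(\{F_{2}, \ldots, F_{q+1}\}) = 0$. Applying Theorem~\ref{tcartan2} with $n = 1$ and its ``$q$'' set equal to $q+1$, together with the reformulation~\eqref{2mtwithramification}, gives
\[
q\, T_{g}(r) \leq \sum_{j=0}^{q+1} N(r, 1_{\circ} \oslash F_{j}) - N(r, 1_{\circ} \oslash C_{\circ}(g, h)) + o\left(\frac{T_{g}(r)}{r^{1-\varsigma-\varepsilon}}\right).
\]
I then identify the individual counting functions. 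Since $g, h$ have no common roots, $N(r, 1_{\circ} \oslash F_{0}) = N(r, f)$ and $N(r, 1_{\circ} \oslash F_{1}) = N(r, 1_{\circ} \oslash f)$. For $k \geq 1$, hypothesis~\eqref{2mtLTassumption} is exactly what is needed: at every pole $\alpha$ of $f$ one has $h(\alpha) > a_{k} + g(\alpha)$, so $F_{k+1} \equiv h$ in a neighborhood of $\alpha$. Comparing the slope jumps of $F_{k+1}$ with those of $f \oplus a_{k} = F_{k+1} \oslash g$ point by point then yields $N(r, 1_{\circ} \oslash F_{k+1}) = N(r, 1_{\circ} \oslash (f \oplus a_{k}))$.

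The main obstacle is controlling the Casoratian discrepancy
\[
D(r) := N(r, 1_{\circ} \oslash g) + N(r, 1_{\circ} \oslash h) - N(r, 1_{\circ} \oslash C_{\circ}(g, h)).
\]
For this I rely on Lemma~\ref{casoratilemma}(iv), which yields the factorisation $C_{\circ}(g, h) = g \odot \overline{g} \odot (f \oplus \overline{f})$. Since $C_{\circ}(g, h)$ is tropical entire, a direct Jensen-type counting identity together with Lemma~\ref{technical} (to absorb the shift between $g$ and $\overline{g}$) produces
\[
N(r, 1_{\circ} \oslash C_{\circ}(g, h)) = 2 N(r, 1_{\circ} \oslash g) + N(r, 1_{\circ} \oslash (f \oplus \overline{f})) - N(r, f \oplus \overline{f}) + o\left(\frac{T_{g}(r)}{r^{1-\varsigma-\varepsilon}}\right).
\]
Applying the tropical Jensen formula~\eqref{tPJ} to both $f$ and $f \oplus \overline{f}$ and simplifying,
\[
D(r) = \bigl[m(r, f) - m(r, f \oplus \overline{f})\bigr] + \bigl[m(r, 1_{\circ} \oslash (f \oplus \overline{f})) - m(r, 1_{\circ} \oslash f)\bigr] + O(1) + o\left(\frac{T_{g}(r)}{r^{1-\varsigma-\varepsilon}}\right),
\]
where both bracketed terms are non-positive because $f \leq f \oplus \overline{f}$ pointwise; hence $D(r)$ is absorbed into the error term. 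Combining everything and replacing $T_{g}(r)$ by $T(r, f) + O(1)$ delivers the assertion~\eqref{2mtLTassertion}. Notably, only hypothesis~\eqref{2mtLTassumption} is invoked in this argument; hypothesis~\eqref{2mtLTassumption2} is not needed, reflecting the paper's claim that Theorem~\ref{tcartan2} strengthens the Laine--Tohge second main theorem.
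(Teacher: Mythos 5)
Your overall strategy---writing $f=h\oslash g$, taking $F_0=g$, $F_1=h$, $F_{k+1}=h\oplus a_k\odot g$, applying Theorem~\ref{tcartan2} with $n=1$ (its ``$q$'' equal to $q+1$), and cancelling the Casoratian term against $N(r,1_\circ\oslash g)+N(r,1_\circ\oslash h)$ via the factorisation $C_\circ(g,h)=g\odot\overline{g}\odot(f\oplus\overline{f})$ together with the Jensen formula---is exactly the route the paper takes, and those parts of your argument (including the identification $N(r,1_\circ\oslash F_{k+1})=N(r,1_\circ\oslash(f\oplus a_k))$ from \eqref{2mtLTassumption} and the estimate of the Casoratian discrepancy) are sound. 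There is, however, one genuine gap: the claim that $\ell(F_{k+1})=2$, hence $\lambda=\mbox{ddg}(\{F_2,\ldots,F_{q+1}\})=0$, does not follow merely from both coefficients $1_\circ$ and $a_k$ being real. By Definition~\ref{LGdef}, $\ell(F_{k+1})=1$ whenever one of the two terms in $\max\{h,\,a_k+g\}$ dominates for every $x$; in particular $F_{k+1}\equiv h$ (i.e.\ $\ell(F_{k+1})=1$) exactly when $f\geq a_k$ on all of $\R$, equivalently $f\oplus a_k\equiv f$. Hypotheses \eqref{2mtLTassumption} and \eqref{2mtLTassumption2} do not exclude this: take $f(x)=|x|$ (entire, so the infimum in \eqref{2mtLTassumption} is over the empty set) and $a_1=-1$; both hypotheses hold, yet $F_2=\max\{|x|,-1\}\equiv h$, so $\lambda\geq 1$ and your application of Theorem~\ref{tcartan2} only yields $(q-\lambda)T_g(r)\leq\cdots$, which is too weak to give \eqref{2mtLTassertion}.

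The paper closes precisely this gap by a case split. For the values $a_k$ with $f\oplus a_k\not\equiv f$ the set $\{F_2,\ldots,F_{q+1}\}$ is non-degenerate and your argument applies verbatim. For a value $a_i$ with $f\oplus a_i\equiv f$ one has $a_i\leq f(x)$ for all $x$, so $m(r,1_\circ\oslash f)\leq\max\{-a_i,0\}$, and the tropical Jensen formula \eqref{tPJ} gives directly $T(r,f)=T(r,1_\circ\oslash f)+f(0)\leq N(r,1_\circ\oslash f)+O(1)=N(r,1_\circ\oslash(f\oplus a_i))+O(1)$; summing these one-value estimates with the non-degenerate part recovers \eqref{2mtLTassertion}. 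You need to add this case analysis (or some other argument disposing of the degenerate values) for your proof to be complete.
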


We will now show that our main result implies Theorem \ref{2mtLT}, and we can even drop the assumption \eqref{2mtLTassumption2}. The fact that one can replace the inequality in \eqref{2mtLTassertion} with an equality follows from the fact that $N(r,1_\circ\oslash(f\oplus a_k)) \leq T(r,f)$ for all $k\in\{1,\ldots,q\}$.

\begin{corollary}\label{cor}
If $f$ is a non-constant tropical meromorphic function of hyper-order $\varsigma<1$, if $\varepsilon>0$, and if $q\geq 1$ distinct values $a_1,\ldots,a_q\in\R$ satisfy
    \begin{equation}\label{2mtLTassumption_cor}
    \max\{a_1,\ldots,a_q\}< \inf\{f(\alpha):\omega_f(\alpha)<0\},
    \end{equation}
then
    \begin{equation}\label{2mtLTassertion_cor}
    qT(r,f) = \sum_{k=1}^q N(r,1_\circ\oslash(f\oplus a_k))+o\left(\frac{T(r,f)}{r^{1-\varsigma-\varepsilon}}\right)
    \end{equation}
for all $r$ outside of an exceptional set of finite logarithmic measure.
\end{corollary}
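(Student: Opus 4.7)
The plan is to write $f$ in its reduced form $f = h\oslash g$ (Proposition \ref{fgh}) and apply Theorem \ref{tcartan2} to the tropical holomorphic curve $[g:h]:\R\to\T\P^1$, after first peeling off those indices $k$ for which the natural linear combination $a_k\odot g\oplus h$ is degenerate. Setting $\mathbf{g}=(g,h)$, the pair is G--M linearly independent (otherwise $f$ would be constant) and Proposition \ref{consistency} yields $T_{\mathbf{g}}(r)=T(r,f)+O(1)$, with $T(r,f)\to\infty$. Let $S:=\{k\in\{1,\ldots,q\}:f(x)>a_k\text{ for every }x\in\R\}$ and $s:=|S|$. For $k\in S$ one has $f\oplus a_k=f$ and $(1_\circ\oslash f)^+\leq\max\{0,-a_k\}$, so $m(r,1_\circ\oslash f)=O(1)$; the tropical Jensen formula \eqref{tPJ} then delivers the cheap bound
$$T(r,f)\leq N\bigl(r,1_\circ\oslash(f\oplus a_k)\bigr)+O(1),\qquad k\in S.$$

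For $k\notin S$ I claim $F_k:=a_k\odot g\oplus h$ is complete in the sense of Definition \ref{completeDef}: the hypothesis \eqref{2mtLTassumption_cor} forces $h>a_k\odot g$ in a neighbourhood of every pole of $f$, while $k\notin S$ supplies a point with $h\leq a_k\odot g$; the globally opposite situation $f\leq a_k$ everywhere is ruled out by non-constancy of $f$ together with \eqref{2mtLTassumption_cor} (if $f$ were entire it would be convex and bounded above, hence constant). I would then apply Theorem \ref{tcartan2} with $n=1$, basis $(g,h)$, and linear combinations $f_0=g$, $f_1=h$, $f_j=F_k$ for $k\notin S$. Since $\lambda=0$, the theorem yields
$$(q-s)\,T(r,f)\leq N\bigl(r,1_\circ\oslash L\bigr)-N(r,L)+o\!\left(\frac{T(r,f)}{r^{1-\varsigma-\varepsilon}}\right),$$
with $L=\bigl(g\odot h\odot\bigodot_{k\notin S}F_k\bigr)\oslash C_\circ(g,h)$. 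Writing $N_0(r,\cdot):=N(r,1_\circ\oslash\cdot)-N(r,\cdot)$, which is additive under $\odot$, and using that every factor in $L$ is tropical entire, one expands
$$N_0(r,L)=N(r,f)+N(r,1_\circ\oslash f)+\sum_{k\notin S}N\bigl(r,1_\circ\oslash(f\oplus a_k)\bigr)-N\bigl(r,1_\circ\oslash C_\circ(g,h)\bigr),$$
invoking $N(r,1_\circ\oslash g)=N(r,f)$, $N(r,1_\circ\oslash h)=N(r,1_\circ\oslash f)$, and the fact that $F_k$ and $g$ share no roots (locally $F_k=h$ near any root of $g$, and $g,h$ share no roots by Proposition \ref{fgh}).

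The principal obstacle is estimating the residual Casoratian contribution $N(r,f)+N(r,1_\circ\oslash f)-N(r,1_\circ\oslash C_\circ(g,h))$; the authors emphasize (just after Theorem \ref{tcartan2}) that no general identity between root counts of a tropical Casoratian and those of its entries is known. My strategy exploits the explicit two-term structure available when $n=1$. Substituting $h=g\odot f$ in the $2\times 2$ Casoratian gives
$$C_\circ(g,h)(x)=g(x)+g(x+1)+\max\{f(x),f(x+1)\}=g\odot\overline{g}\odot(f\oplus\overline{f}),$$
a meromorphic analogue of Lemma \ref{casoratilemma}(iv). Additivity of $N_0$ then gives $N(r,1_\circ\oslash C_\circ(g,h))=N(r,1_\circ\oslash g)+N(r,1_\circ\oslash\overline{g})+N_0(r,f\oplus\overline{f})$, and since $(f\oplus\overline{f})(x)\geq f(x)$, the tropical Jensen identity $N_0(r,u)=\tfrac12(u(r)+u(-r))-u(0)$ yields $N_0(r,f\oplus\overline{f})\geq N_0(r,f)-O(1)$. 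A short rearrangement reduces the residual to $N(r,1_\circ\oslash g)-N(r,1_\circ\oslash\overline{g})+O(1)$, which is $o(T(r,f)/r^{1-\varsigma-\varepsilon})$ by Lemma \ref{technical} applied to $N(\cdot,1_\circ\oslash g)=N(\cdot,f)$ (hyper-order $\leq\varsigma<1$). Combining this with the $s$ trivial bounds for $k\in S$ from the first paragraph gives $qT(r,f)\leq\sum_{k=1}^{q}N(r,1_\circ\oslash(f\oplus a_k))+o(T(r,f)/r^{1-\varsigma-\varepsilon})$. The reverse inequality that upgrades this to the equality \eqref{2mtLTassertion_cor} is the cheap observation $N(r,1_\circ\oslash(f\oplus a_k))\leq T(r,f\oplus a_k)\leq T(r,f)+O(1)$ noted by the authors just before the statement.
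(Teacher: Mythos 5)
Your proof is correct and follows essentially the same route as the paper: reduce to the curve $[g:h]$, apply Theorem~\ref{tcartan2} with $n=1$ to the complete combinations $a_k\odot g\oplus h$, dispose of the degenerate values by the trivial Jensen bound, and control the Casoratian contribution through its explicit $2\times 2$ factorization together with Lemma~\ref{technical} (your identity $C_\circ(g,h)=g\odot\overline{g}\odot(f\oplus\overline{f})$ is the paper's factorization $C_\circ(f_0,f_1)=D\odot f_0\odot f_1$ in disguise, and your lower bound on $N_0\bigl(r,f\oplus\overline{f}\bigr)$ is, via Jensen, the same estimate as the paper's upper bound on $m(r,1_\circ\oslash D)-m(r,D)$). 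The only slip is that $S$ should be defined by $f\geq a_k$ everywhere rather than $f>a_k$: if $\min f=a_k$ is attained, then $k\notin S$ yet $F_k\equiv h$ has $\ell(F_k)=1$ and is not complete, so this boundary case must be grouped with the degenerate ones, where your trivial Jensen bound still applies.
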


By choosing $q=1$ in Corollary~\ref{cor}, it follows that
    \begin{equation*}
    T(r,f) =  N(r,1_\circ\oslash(f\oplus a))+o\left(\frac{T(r,f)}{r^{1-\varsigma-\varepsilon}}\right)
    \end{equation*}
provided that $a\in\R$ satisfies
    \begin{equation}\label{acond}
    a < \inf\{f(\alpha):\omega_f(\alpha)<0\}.
    \end{equation}
This means that there is in practise no room left for any meaningful ramification in Corollary~\ref{cor}, and any possible ramification would have to be included in the error term $o\left(\frac{T(r,f)}{r^{1-\varsigma-\varepsilon}}\right)$.

We have managed to drop the condition \eqref{2mtLTassumption2} from Corollary~\ref{cor} by using Theorem~\ref{tcartan2}, but the assumption \eqref{2mtLTassumption_cor} cannot be deleted. Namely, by omitting \eqref{2mtLTassumption_cor} we can arrive at cases such that $f\oplus a_k\equiv a_k$ for which $$
N(r,1_\circ\oslash(f\oplus a_k)) = O(1)
$$
and hence the assertion of Corollary~\ref{cor} fails to be valid.

%In this respect the situation in the $n$-dimensional tropical projective space is different, as we saw in the previous section.

%
%The assertion of Corollary~\ref{cor} of Theorem \ref{tcartan} is identical to the one of Theorem~\ref{2mtLT}, but the assumptions are slightly different. Although \eqref{2mtLTassumption} is the same as \eqref{2mtLTassumption_cor}, the assumption \eqref{2mtLTassumption_cor2}, in fact, only complements \eqref{2mtLTassumption2}. Thus neither one of the assumptions \eqref{2mtLTassumption_cor2} or \eqref{2mtLTassumption2} implies the other.
%%This is because the failure of \eqref{2mtLTassumption_cor2} implies the existence of a value $a_k$ such that $a_k\le f(x)$ holds for all $x$, which shows that the quantity on the left hand side of \eqref{2mtLTassumption2} is not smaller that the value $a_k$. Hence, the assumption \eqref{2mtLTassumption2} is satisfied, when each $a_j$ in Corollary \ref{cor} is not $0_\circ$, unless \eqref{2mtLTassumption_cor2} holds.
%The purpose of the condition $f\not\equiv f\oplus a_k$ is to ensure that the targets $a_k$ are genuine in the sense that none of the functions $f\oplus a_k$ degenerate to $f$.

\begin{proof}[Proof of Corollary \ref{cor}.]  We will consider two different cases. First, we assume that
    \begin{equation}\label{2mtLTassumption_cor2}
    f\not\equiv f\oplus a_k
    \end{equation}
for all $k=1,\ldots,q$. By Proposition \ref{fgh}, the function $f$ can be represented in the form $f=g_1\oslash g_0$, where $g_0$ and $g_1$ are tropical entire functions without common roots. Since by assumption $f$ is non-constant, it follows that $g_0$ and $g_1$ are linearly independent in the Gondran-Minoux sense. Let $n=1$ in Theorem \ref{tcartan2}, and put $g=[g_0:g_1]$. Further, let $f_0= g_0$, $f_1= g_1$, and
\begin{equation}\label{fk}
f_k = (a_{k-1}\odot g_0)\oplus (1_\circ \odot g_1)
\end{equation}
for $k\in\{2,\ldots,q+1\}$. Then
\begin{eqnarray*}
L(x)  &=& f_2(x)\odot \cdots \odot f_{q+1}(x) \oslash \big( (f_1(x+1)\oslash f_1(x))\oplus (f_0(x+1) \oslash f_0(x))\big)
\end{eqnarray*}
since
\begin{eqnarray*}
C_{\circ}(f_0,f_1) &=& (f_0(x)\odot f_1(x+1))\oplus (f_0(x+1)\odot f_1(x))\\
                   &=& (f_1(x+1)\oslash f_1(x))\oplus(f_0(x+1) \oslash f_0(x))\odot f_0(x) \odot f_1(x).
\end{eqnarray*}
Denoting $D(x):=(f_1(x+1)\oslash f_1(x))\oplus(f_0(x+1) \oslash f_0(x))$, we have
$$
N(r, 1_\circ\oslash L)\leq \sum_{k=2}^{q+1} N\bigl(r,1_\circ\oslash f_k\bigr)+N(r,D)
$$
and
$$
N(r, L)= N(r,1_\circ\oslash D)
$$
so that
\begin{equation}\label{mrD}
\begin{split}
N(r, 1_\circ\oslash L)-N(r,L)
&\leq  \sum_{k=2}^{q+1} N\bigl(r,1_\circ\oslash f_k \bigr)
 +N(r,D)-N(r,1_\circ\oslash D)\\
&= \sum_{k=2}^{q+1} N\bigl(r,1_\circ\oslash f_k\bigr)
 - m(r,D)+m(r,1_\circ\oslash D)+D(0),
\end{split}
\end{equation}
where $D(0)=\max\{g_1(1)-g_1(0),g_0(1)-g_0(0)\}$ depends only on the function $f$ and not on the values $a_1,\ldots,a_q$. Here we applied the tropical Jensen formula to the function $D(x)$. Moreover, since
    $$
    1_\circ\oslash D(\pm r) = \min\{f_1(\pm r)-f_1(\pm r+1),f_0(\pm r)-f_0(\pm r+1)\} \leq f_0(\pm r)\oslash f_0(\pm r+1),
    $$
say (the upper bound could have been as easily taken in terms of $f_1$), it follows that
    \begin{equation}\label{mrDerror}
    \begin{split}
    m(r,&1_\circ\oslash D) - m(r,D) \\ &= \frac{1}{2}(1_\circ\oslash D)(r)+\frac{1}{2}(1_\circ\oslash D)(-r)\\
    &\leq \frac{1}{2}(f_0(r)\oslash f_0(r+1))+\frac{1}{2}(f_0(-r)\oslash f_0(-r+1)) \\
    &= m(r,f_0(x)\oslash f_0(x+1)) - m(r,f_0(x+1)\oslash f_0(x))\\
    &= N(r,f_0(x+1)\oslash f_0(x)) - N(r,f_0(x)\oslash f_0(x+1)) + f_0(0)\oslash f_0(1)\\
    &=N(r,1_\circ\oslash f_0(x)) - N(r,1_\circ\oslash f_0(x+1)) + f_0(0)\oslash f_0(1) = o\left(\frac{T_{g}(r)}{r^{1-\varsigma-\varepsilon}}\right),
    \end{split}
    \end{equation}
where the last asymptotic equation follows by a similar reasoning as in \eqref{Nj}. By combining \eqref{mrD} and \eqref{mrDerror}, it follows that
    \begin{equation}\label{NrD}
    N(r, 1_\circ\oslash L)-N(r,L) \leq \sum_{k=2}^{q+1} N\bigl(r,1_\circ\oslash f_k \bigr) + o\left(\frac{T_{g}(r)}{r^{1-\varsigma-\varepsilon}}\right),
    \end{equation}
where $r$ tends to infinity outside of an exceptional set of finite logarithmic measure.

By assumption \eqref{2mtLTassumption_cor}, it follows that the roots of $g_0$ are exactly the poles of $a_k\oplus f$, counting multiplicity, for all $k=1,\ldots,q$. Therefore, by \eqref{fk} we can see that
    \begin{equation}\label{NN}
    N\bigl(r,1_\circ\oslash f_k\bigr) = N(r,1_\circ\oslash(f\oplus a_{k-1}))
    \end{equation}
for all $k=2,\ldots,q+1$. Moreover, assumptions \eqref{2mtLTassumption_cor} and \eqref{2mtLTassumption_cor2} imply that
    \begin{equation*}
    f\not\equiv f\oplus a_k\not\equiv a_k \qquad \textrm{for all } k\in\{1,\ldots,q\},
    \end{equation*}
which is the same as
    \begin{equation}\label{2mtLTassumptioncombined}
    g_1\oslash g_0\not\equiv f_{k+1}\oslash g_0 \not\equiv a_k \qquad \textrm{for all } k\in\{1,\ldots,q\}.
    \end{equation}
Since $f$ is non-constant by assumption, it follows by the condition \eqref{2mtLTassumption_cor2} that the set $\{f_2,\ldots,f_{q+1}\}$ of tropical linear combinations of $g_0$ and $g_1$ is non-degenerate, and so
by Theorem \ref{tcartan2} with $n=1$ and $\lambda=0$, and using \eqref{NrD}, we have
	\begin{equation}\label{Tgform}
	q T_{g}(r)\leq \sum_{k=2}^{q+1} N\bigl(r,1_\circ\oslash f_k\bigr)+o\left(\frac{T_{g}(r)}{r^{1-\varsigma-\varepsilon}}\right),
	\end{equation}
as $r$ tends to infinity outside of an exceptional set of finite logarithmic measure. Finally, by applying Proposition~\ref{consistency} with $f=g_0\oslash g_1$ and $g=[g_0:g_1]$, and using \eqref{NN}, equation \eqref{Tgform} becomes
	$$
	q T(r,f)\leq \sum_{k=1}^q N\bigl(r,1_\circ\oslash(f\oplus a_k)\bigr)+o\left(\frac{T(r,f)}{r^{1-\varsigma-\varepsilon}}\right),
	$$	
where the exceptional set is the same as in \eqref{Tgform}.

Suppose finally that \eqref{2mtLTassumption2} is not satisfied for some $a_{i}\in\{a_1,\ldots,a_q\}$. Then, for all $a_i$ for which \eqref{2mtLTassumption2} is not satisfied, we have
    \begin{equation*}
    f\equiv f\oplus a_i,
    \end{equation*}
which means that
    \begin{equation*}
    a_i \leq f(x)
    \end{equation*}
for all $x\in\R$. Therefore the function $-f=1_\circ\oslash f$ is bounded above by $-a_i$, and so by the definition of the tropical proximity function,
    \begin{equation*}
    m(r,1_\circ\oslash f) \leq \max\{-a_i,0\}.
    \end{equation*}
Hence, by the tropical Jensen formula,
    \begin{equation*}
    \begin{split}
    T(r,f) &= T(r,1_\circ\oslash f) + f(0)\\ &\leq N(r,1_\circ\oslash f) +  \max\{-a_i,0\} +f(0) \\
    &= N(r,1_\circ\oslash (f\oplus a_i)) +  O(1),
    \end{split}
    \end{equation*}
which is the required estimate. By adding up the corresponding estimates for all $a_i$ such that \eqref{2mtLTassumption2} is not satisfied, and applying the first part of the proof for the rest, we obtain the assertion of the corollary. 
\end{proof}

\begin{remark} Let us recall the second main theorem in the classical Nevanlinna theory for meromorphic functions in the plane~$\mathbb{C}$.
This states that the estimate
$$
(q-1)T(r,f)\leq \overline{N}(r,f)+\sum_{k=1}^q \overline{N}\bigl(r,1/(f-a_k)\bigr)+O\bigl(\log rT(r,f)\bigr)
$$
holds for all $r$ outside of an exceptional set of finite measure.
%.
Here $a_1, \ldots , a_q \in\mathbb{C}$ are $q\geq 1$ distinct values.
The counting function $\overline{N}(r,\phi)$ counts distinct poles of the meromorphic function $\phi$ under consideration, where the multiplicity is ignored.
By definition, the function possesses the feature that distinguishes this and other counting functions, that is,
$$
\overline{N}(r,\phi)\equiv \overline{N}(r, \phi^{(j)}) \quad \text{for~all~} j\in\mathbb{N}.
$$
On the other hand, we have looked in the proof of Corollary~\ref{cor} as an application of Lemma~\ref{technical}, thanks to the assumption $\varsigma(\psi)<1$ on a tropical meromorphic function $\psi$, the estimate
$$
N\bigl(r,\psi(x)\bigr)= N\bigl(r, \psi(x+j)\bigr)+o\left(\frac{T(r,\psi)}{r^{1-\varsigma-\varepsilon}}\right) \quad \text{for~all~} j\in\mathbb{N},
$$
holds for any $\varepsilon>0$ as $r\to\infty$ outside of an exceptional set of finite logarithmic measure.
We might think that this is a sign of a possibility to incorporate a more exact estimate including a concept of truncation for the `multiplicity' by means of the shift operator.
\end{remark}

\def\cprime{$'$}

%\bibliographystyle{plain}
%\bibliography{database1114}

\begin{thebibliography}{10}

\bibitem{ablowitzhh:00}
Ablowitz, M.~J.,  R.~G. Halburd, and B.~Herbst.
\newblock ``On the extension of the {P}ainlev{\'e} property to difference
  equations.''
\newblock {\em Nonlinearity} 13 (2000): 889--905.

\bibitem{akiangg:09}
Akian, M., S.~Gaubert, and A.~E. Guterman.
\newblock ``Linear independence over tropical semirings and beyond.''
\newblock In {\em Tropical and Idempotent mathematics}, volume 495 of {\em
  Contemp. Math.}, pages 1--38. Providence, RI: Amer. Math. Soc., 2009.

\bibitem{butkovic:10}
Butkovi{\v{c}}, P.
\newblock {\em Max-linear systems: theory and algorithms}.
\newblock Springer Monographs in Mathematics. London: Springer-Verlag London Ltd., 2010.

\bibitem{cartan:33}
Cartan, H.
\newblock ``Sur l{\'e}s zeros des combinaisons lin{\'e}aires de $p$ fonctions
  holomorphes donn{\'e}es.''
\newblock {\em Mathematica Cluj} 7 (1933): 5--31.

\bibitem{cherryy:01}
Cherry, W., and Z.~Ye.
\newblock {\em {N}evanlinna's theory of value distribution: The second main theorem and its error terms}.
\newblock Berlin: Springer-Verlag, 2001.

\bibitem{gondranm:79}
Gondran, M., and M.~Minoux.
\newblock {\em Graphes et algorithmes}, volume~37 of {\em Collection de la
  Direction des \'Etudes et Recherches d'\'Electricit\'e de France [Collection
  of the Department of Studies and Research of \'Electricit\'e de France]}.
\newblock Paris: \'Editions Eyrolles, 1979.

\bibitem{gondranm:84}
Gondran, M., and M.~Minoux.
\newblock ``Linear algebra in dioids: a survey of recent results.''
\newblock In {\em Algebraic and combinatorial methods in operations research},
  volume~95 of {\em North-Holland Math. Stud.}, pages 147--163. Amsterdam: North-Holland, 1984.

\bibitem{halburdkt:09}
Halburd, R.~G., R.~Korhonen, and K.~Tohge.
\newblock ``Holomorphic curves with shift-invariant hyperplane preimages.''
\newblock {\em Trans. Amer. Math. Soc.} 366 (2014): 4267-–4298.

\bibitem{halburdk:07PLMS}
Halburd, R.~G., and R.~J. Korhonen.
\newblock ``Finite-order meromorphic solutions and the discrete {P}ainlev{\'e}
  equations.''
\newblock {\em Proc. London Math. Soc.} 94 (2007): 443--474.

\bibitem{halburds:09}
Halburd, R.~G., and N.~J. Southall.
\newblock ``Tropical {N}evanlinna theory and ultradiscrete equations.''
\newblock {\em Int. Math. Res. Not. IMRN} 2009 (2009): 887--911.

\bibitem{hayman:64}
Hayman, W.~K.
\newblock {\em Meromorphic functions}.
\newblock Oxford: Clarendon Press, 1964.

\bibitem{itenbergms:07}
Itenberg, I., G.~Mikhalkin, and E.~Shustin.
\newblock {\em Tropical algebraic geometry}, volume~35 of {\em Oberwolfach
  Seminars}.
\newblock Basel: Birkh\"auser Verlag, second edition, 2009.

\bibitem{lainet:11}
Laine, I., and K.~Tohge.
\newblock ``Tropical {N}evanlinna theory and second main theorem.''
\newblock {\em Proc. London Math. Soc.} 102 (2011): 883--922.

\bibitem{lainey:10}
Laine, I., and C.-C. Yang.
\newblock ``Tropical versions of {C}lunie and {M}ohon'ko lemmas.''
\newblock {\em Complex Var. Elliptic Equ.} 55 (2010): 237--248.

\bibitem{maclagans:09}
Maclagan, D., and B.~Sturmfels.
\newblock {\em Introduction to Tropical Geometry}.
\newblock Preprint.

\bibitem{nochka:83}
Nochka, E.~I.
\newblock ``On the theory of meromorphic curves.''
\newblock {\em Dokl. Akad. Nauk SSSR} 269 (1983): 547--552.

\bibitem{olsderr:88}
Olsder, G.~J., and C.~Roos.
\newblock ``Cram\'er and {C}ayley-{H}amilton in the max algebra.''
\newblock {\em Linear Algebra Appl.} 101 (1988): 87--108.

\bibitem{ru:01}
Ru, M.
\newblock {\em Nevanlinna theory and its relation to {D}iophantine
  approximation}.
\newblock River Edge, NJ: World Scientific Publishing Co., Inc., 2001.

\bibitem{speyers:09}
Speyer, D., and B.~Sturmfels.
\newblock ``Tropical mathematics.''
\newblock {\em Math. Mag.} 82 (2009): 163--173.

\bibitem{tsai:12}
Tsai, Y.-L.
\newblock ``Working with tropical meromorphic functions of one variable.''
\newblock {\em Taiwanese J. Math.} 16 (2012): 691--712.

\bibitem{yoeli:61}
Yoeli, M.
\newblock ``A note on a generalization of {B}oolean matrix theory.''
\newblock {\em Amer. Math. Monthly} 68 (1961): 552--557.
\end{thebibliography}

\end{document}